\documentclass[reqno]{amsart}
\usepackage[utf8]{inputenc}
\usepackage{tikz-cd}
\usepackage[margin=1.3in]{geometry}  
\usepackage{graphicx}    
\usepackage{graphicx}
\usepackage{float}
\usepackage{dsfont}
\usetikzlibrary{matrix}
\newcommand*\Z{\mathbb{Z}}
\newcommand*\ZZZ{|[draw,circle]| \QQ}

\usepackage{amsmath}               
\usepackage{amsfonts}              
\usepackage{amsthm}                
\usepackage{enumitem}
\usepackage{amssymb}
\usepackage{tabularx}
\usepackage{amsmath}
\usepackage{mathtools}
\usepackage[mathscr]{euscript}
\usepackage[utf8]{inputenc}
\usepackage[english]{babel}
\usepackage{multicol}
\usepackage{hyperref}
\usepackage{tikz}
\usetikzlibrary{matrix}

\tikzset{diagram/.style={matrix of math nodes, inner sep=0pt, row
    sep=#1, column sep=2.5em, text height=1.5ex, text depth=.25ex,
    nodes={inner sep=1ex}}}
\tikzset{diagram/.default=2.5em}
\newcommand\diagram{\path node[diagram]}

\newtheorem{thm}{Theorem}[section]
\newtheorem{prop}[thm]{Proposition}
\newtheorem{lemma}[thm]{Lemma}
\newtheorem{cor}[thm]{Corollary}

\theoremstyle{definition}
\newtheorem{defn}{Definition}[section]

\newtheorem{rmk}{Remark}[section]
\newtheorem{claim}{Claim}
\newtheorem*{related works}{Related Works}

\newtheorem*{question*}{Question}

\theoremstyle{definition}
\newtheorem{question}{Question}

\numberwithin{equation}{section}

\newcommand{\ld}{\lambda}
\newcommand{\PP}{\CC{{P}}}
\newcommand{\NN}{\mathbb{N}}
\newcommand{\CC}{\mathbb{C}}

\newcommand{\QQ}{\mathbb{Q}}
\newcommand{\ZZ}{\mathbb{Z}}
\newcommand{\fq}{\mathbb{F}_q}
\newcommand{\ff}{\mathbb{F}}

\newcommand{\ir}{\mathrm{Irr}}
\newcommand{\ird}{\ir_{d,n}(\CC)}
\newcommand{\irdd}{\ir_{d,n+1}(\CC)}
\newcommand{\irdf}{\ir_{d,n}(\fq)}
\newcommand{\ii}{\ir_{d}(\CC)}

\newcommand{\red}{\mathrm{Red}}
\newcommand{\rd}{\red_{d,n}(\CC)}
\newcommand{\rdd}{\red_{d,n+1}(\CC)}
\newcommand{\sym}{\mathrm{Sym}}
\newcommand{\pl}{\mathrm{Poly}}
\newcommand{\pld}{\pl_{d,n}(\CC)}

\newcommand{\s}{{T}_{\ld,n}}
\newcommand{\sn}{{T}_{\ld,n+1}}
\newcommand{\rdn}{\red_{d,n+1}(\CC)}

\def\mul#1#2{\ensuremath{\left(\kern-.3em\left(\genfrac{}{}{0pt}{}{#1}{#2}\right)\kern-.3em\right)}}

\title[Cohomology of the space of complex irreducible polynomials in several variables]{Stability of the cohomology of the space of complex irreducible polynomials in several variables}

\author{Weiyan Chen}

\begin{document}

\maketitle

\begin{abstract}
    We prove that the space of complex irreducible polynomials of degree $d$ in $n$ variables satisfies two forms of homological stability: first, its cohomology stabilizes as $d\to\infty$, and second, its compactly supported cohomology stabilizes as $n\to\infty$. Our topological results are inspired by counting results over finite fields due to Carlitz and Hyde.
\end{abstract}

\section{Introduction}

The interests of counting irreducible polynomials over finite fields have stretched from eighteenth century to the modern era. Let $\irdf$ denote the number of irreducible polynomials of total degree $d$  in $n$ variables with coefficients in $\fq$ up to scalar multiplications.  For example, Gauss (\cite{Gauss}, page 611) first calculated the size of $\ir_{d,1}(\fq)$. In 1963, Carlitz \cite{Car} proved that for integers $n>1$
\begin{equation}\label{Car}
    \frac{|\irdf|}{q^{{{d+n}\choose n}-1}}\longrightarrow 1+q^{-1}+q^{-2}+\cdots \ \ \ \ \ \ \ \  \text{as $d\to\infty$.}
\end{equation}
In 2018,  Hyde (Theorem 1.1 in \cite{Hyde}) proved that $|\irdf|$ is always a polynomial in $q$ which converges coefficient-wise to a formal power series $P_d(q)$ as $n\to\infty$. In other words, in the formal power series ring $\QQ[[q]]$ equipped with the $q$-adic topology (under which higher powers of $q$ are considered smaller),
\begin{equation}\label{Hyde}
    |\irdf|\longrightarrow P_d(q) \ \ \ \ \ \ \ \ \ \ \ \ \ \ \ \ \ \ \ \ \ \ \ \  \text{as $n\to\infty$.}
\end{equation}

In this paper, we will pass from $\fq$ to $\CC$  and study the topology of the following manifold: 
$$\ird:=\{\text{irreducible complex polynomials in $n$ variables with degree $d$}\}/\CC^\times.$$ 
In  \cite{CEF}, Church-Ellenberg-Farb used the Grothendieck-Lefschetz trace formula  to connect asymptotic point-counts over finite fields and stability phenomena in cohomology. Heuristics based on this connection lead us to ask the following topological questions inspired by the aforementioned counting results of Carlitz and Hyde (see Section \ref{why} for a brief explanation of the heuristics):
\begin{question}\label{q1}
Does $H^i(\ird;\QQ)$ stabilize as $d\to\infty$?
\end{question}
\begin{question}\label{q2}
Does $H^i_c(\ird;\QQ)$ stabilize as $n\to\infty$?
\end{question}
Observe that $H^i_c(\ird;\QQ)$ is Poincar\'e dual to $H^{D-i}_c(\ird;\QQ)$ where $D$ is the real dimension of the manifold $\ird$. Thus, Question \ref{q2} equivalently asks if $\ird$ satisfies cohomological stability in codimensions. 

We will prove the following two theorems, each respectively answering the questions above affirmatively. 

\begin{thm}\label{homo stab low}
For $n>1$ and $d$ any positive integer, when $i\le2\bigg[{{d+n-1}\choose {n-1}}-n-1\bigg]$, we have
\[
H_i(\ird,\Z)\cong
  \begin{cases}
   \Z \ \ \ &  i\text{ is even} \\
   0 \ \ \ &  i\text{ is odd.}
  \end{cases}
\]
\end{thm}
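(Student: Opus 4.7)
The plan is to realize $\ird$ as the complement of a high-complex-codimension subvariety in a complex projective space, and then combine a long exact sequence in compactly supported cohomology with Poincar\'e duality to transfer the known integral homology of projective space to $\ird$.

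Let $V_d$ denote the complex vector space of polynomials of total degree at most $d$ in $n$ variables, so $\dim_{\CC} V_d = \binom{d+n}{n}$, and identify $\ird$ with a Zariski-open subset of $\mathbb{P}(V_d) \cong \mathbb{CP}^N$ where $N := \binom{d+n}{n} - 1$. The complement $Z := \mathbb{CP}^N \setminus \ird$ is the union of $\mathbb{P}(V_{d-1})$ (polynomials of degree $<d$, of complex codimension $\binom{d+n-1}{n-1}$) together with, for each $1 \le k \le \lfloor d/2\rfloor$, the image $M_k$ of the multiplication map $\mathbb{P}(V_k) \times \mathbb{P}(V_{d-k}) \to \mathbb{P}(V_d)$ sending $([g],[h]) \mapsto [gh]$. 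Because a generic product $gh$ with $g$ of degree exactly $k$ and $h$ of degree exactly $d-k$ admits an essentially unique such factorization, this map is generically finite onto its image, so
\[ \mathrm{codim}_{\CC}\, M_k \;=\; \binom{d+n}{n} - \binom{k+n}{n} - \binom{d-k+n}{n} + 1. \]
A direct binomial computation via the consecutive differences shows this is minimized at $k = 1$, giving $c := \binom{d+n-1}{n-1} - n$. Hence $Z$ has complex codimension at least $c$ in $\mathbb{CP}^N$.

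Next, I would apply the long exact sequence in compactly supported cohomology for the open–closed decomposition $\ird = \mathbb{CP}^N \setminus Z$:
\[ \cdots \to H^{i-1}_c(Z;\Z) \to H^i_c(\ird;\Z) \to H^i(\mathbb{CP}^N;\Z) \to H^i_c(Z;\Z) \to \cdots \]
Since $Z$ is a complex algebraic variety of complex dimension at most $N - c$ (hence real dimension at most $2(N-c)$), we have $H^j_c(Z;\Z) = 0$ for all $j > 2(N-c)$, so the middle arrow is an isomorphism whenever $i \ge 2N - 2c + 2$. Poincar\'e duality on the orientable real-$2N$-dimensional complex manifolds $\ird$ and $\mathbb{CP}^N$ then gives
\[ H_{2N-i}(\ird;\Z) \;\cong\; H^i_c(\ird;\Z) \;\cong\; H^i(\mathbb{CP}^N;\Z) \;\cong\; H_{2N-i}(\mathbb{CP}^N;\Z). \]
Substituting $j = 2N - i$ converts the range $i \ge 2N - 2c + 2$ into $j \le 2c - 2 = 2\bigl[\binom{d+n-1}{n-1} - n - 1\bigr]$; since $H_j(\mathbb{CP}^N;\Z) = \Z$ for even $j \in [0,2N]$ and vanishes for odd $j$, the theorem follows.

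The main obstacle I expect is the codimension estimate—specifically, verifying that each multiplication map has generically finite fibers (so that its image has the expected complex dimension) and that $k=1$ genuinely minimizes the resulting codimension among all $1 \le k \le \lfloor d/2\rfloor$. Both reduce to short combinatorial checks: generic uniqueness of factorization (one chooses $g$ linear and $h$ irreducible of degree $d-1$) and a binomial comparison of consecutive terms. Beyond this, the argument is a standard complement-of-high-codimension computation, and it remains valid integrally because $H^{\ast}(\mathbb{CP}^N;\Z)$ is torsion-free and concentrated in even degrees.
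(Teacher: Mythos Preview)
Your argument is correct and runs along the same line as the paper's: bound the complex dimension of the complement of $\ird$ inside an ambient projective space, then combine the open--closed long exact sequence for $H^*_c$ with Poincar\'e duality. The paper carries this out inside $\pld$ (Lemma~\ref{projective}) with complement $\rd$, stratifies $\rd$ by partitions, and reads off the dimension bound $\dim_\CC \rd\le \binom{d+n-1}{n}+n-1$ from the spectral sequence of Lemma~\ref{ss} together with the convexity argument of Lemma~\ref{bound}; you instead work in the full $\mathbb{CP}^N$, decompose the complement as $\mathbb{P}(V_{d-1})\cup\bigcup_k M_k$, and bound each $\dim M_k$ directly. Both routes reduce to the identical convexity check that the largest piece corresponds to the factorization $1+(d-1)$, and both yield the same numerical bound $N-c=\binom{d+n-1}{n}+n-1$. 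Your version is marginally more elementary for this theorem alone, since it replaces the spectral sequence by the standard fact that $H^j_c$ of a complex variety vanishes above twice its complex dimension; the paper sets up the partition stratification and its spectral sequence here because that machinery is genuinely needed later for Theorems~\ref{high stab} and~\ref{vanishing2}. One small remark: for the bound you need only $\dim M_k\le\dim\bigl(\mathbb{P}(V_k)\times\mathbb{P}(V_{d-k})\bigr)$, which is automatic for the image of a morphism, so the generic-finiteness discussion (while true) is not actually required.
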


\begin{rmk}

Theorem \ref{homo stab low} implies that $H^i(\ird,\Z)$ stabilizes as either $n$ or $d$ increases, giving a positive answer to Question \ref{q1}. In fact, Carlitz also proved that the same limit in (\ref{Car}) holds as $n\to\infty$ (see equation (11) in \cite{Car}), although he didn't state it in the main theorem. Thus, Theorem \ref{homo stab low} can be viewed as a topological analog of Carlitz' result. 
\end{rmk}



Observe that there is a natural inclusion $\ird \hookrightarrow\irdd$ given by forgetting the $(n+1)$-th variable. This inclusion is an embedding of a closed subspace, and hence a proper map. 

\begin{thm}
\label{high stab}
For $n,d>1$ and for any $i<\frac{2n}{d-1}-\frac{(d-2)(d-3)}{2}-1$, the natural inclusion $\ird \hookrightarrow\irdd$ induces an isomorphism
$$H^i_c(\ird; \QQ)\xleftarrow[]{\cong} H^i_c(\irdd;\QQ).$$
\end{thm}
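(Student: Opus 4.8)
The plan is to understand the space $\ird$ by relating it to the much simpler space of \emph{all} degree-$d$ polynomials (up to scalar), $\pld \cong \CC P^{N}$ with $N = \binom{d+n}{n}-1$, via the complement — the space of \emph{reducible} polynomials $\rd$. Since $\pld$ is a closed projective space of complex dimension $N$ and $\rd$ is a closed subvariety, $\ird = \pld \setminus \rd$ is an open subvariety, and there is a long exact sequence relating $H^*_c(\ird)$, $H^*_c(\rd)$, and $H^*_c(\pld) = H^*(\CC P^N)$. Because the cohomology of $\CC P^N$ vanishes in odd degrees and is very sparse (one-dimensional in each even degree up to $2N$), the compactly supported cohomology $H^i_c(\ird)$ in low degrees is essentially governed by $H^{i-1}_c(\rd)$ — more precisely, for $i$ small relative to $N$, $H^i_c(\ird;\QQ) \cong H^{i-1}_c(\rd;\QQ)$ (with a correction only in degree $0$). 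The same analysis applies verbatim with $n$ replaced by $n+1$, and the inclusion $\ird \hookrightarrow \irdd$ is compatible with the inclusion $\rd \hookrightarrow \rdd$ (forgetting the last variable); so it suffices to prove that $H^{i-1}_c(\rd;\QQ) \xleftarrow{\cong} H^{i-1}_c(\rdd;\QQ)$ in the stated range.

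Next I would stratify the space of reducible polynomials by the "shape" of the factorization. A reducible degree-$d$ polynomial factors as a product of lower-degree polynomials, and grouping by the multiset of degrees $\lambda \vdash d$ (a partition with at least two parts) gives a finite, locally closed stratification of $\rd$ by strata $\s$ (which I expect is exactly the notation introduced but not shown in the excerpt: $\s$ is the stratum of polynomials whose factorization type is $\lambda$, in $n$ variables). Each stratum $\s$ fibers — up to a symmetric-group quotient accounting for repeated factors — over a product of spaces of irreducible (or at least squarefree/coprime-factored) polynomials of the smaller degrees. Compactly supported cohomology is additive over such stratifications (via the long exact sequences of the closed-open decompositions), so $H^*_c(\rd)$ is assembled from the $H^*_c(\s)$, and likewise for $\rdd$. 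The crucial point is then a \emph{codimension estimate}: one must show that for every reducible type $\lambda$, the stratum $\s$ sits in $\rd$ (hence its contribution to $H^*_c$ begins) in a codimension that grows with $n$, and that the inclusion $\s \hookrightarrow \sn$ induces an isomorphism on $H^*_c$ in a range — this range being controlled by the smallest relevant codimension, which for the "cheapest" splitting $d = 1 + (d-1)$ produces the quantity $\frac{2n}{d-1} - \frac{(d-2)(d-3)}{2} - 1$ appearing in the statement. The term $\binom{d+n}{n}$ grows like a polynomial of degree $n$ in $d$ but the codimension of the smallest stratum grows only linearly, and balancing the various $\lambda$ contributions is where the exact numerical bound comes from.

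To make the per-stratum comparison work I would proceed by induction on $d$ (and simultaneously on $n$), using that each stratum's cohomology is built from $H^*_c$ of spaces $\mathrm{Irr}$ or $\mathrm{Red}$ of \emph{smaller} degree, for which the theorem — or its homological analog, Theorem \ref{homo stab low} — is already known. Concretely: $H^*_c(\s;\QQ)$ is computed, via a Leray/Serre spectral sequence for the fibration onto the base of tuples of factors and an Euler-characteristic or transfer argument for the $S_k$-quotients (over $\QQ$, invariants of a finite group are exact), in terms of $H^*_c$ of products of lower-degree $\mathrm{Irr}$'s; the same holds one dimension up; and the inclusion of the $n$-variable version into the $(n+1)$-variable version induces, by the inductive hypothesis applied to each factor, an isomorphism in the appropriate range, which one then tracks through the spectral sequence. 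The main obstacle I anticipate is purely bookkeeping but genuinely delicate: keeping the \emph{stability range} sharp as it passes through (i) the long exact sequence with $\CC P^N$, (ii) the stratification spectral sequences summed over all partitions $\lambda \vdash d$ with $\ge 2$ parts, and (iii) the induction on $d$ — at each step the range can only shrink, and one must verify that the worst case over all $\lambda$ still leaves exactly the claimed bound. A secondary subtlety is that the "strata" are only coprime-factored or squarefree loci, not literally products of $\mathrm{Irr}$'s, so one needs a further (easier) stratification or a clean statement that replacing $\mathrm{Irr}$ by the space of coprime tuples does not change $H^*_c$ in the relevant range — essentially because the non-coprime locus is again high-codimension.
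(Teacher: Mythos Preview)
Your approach is essentially the paper's: reduce to $\rd$ via the long exact sequence with $\pld$, stratify $\rd$ by factorization type $\lambda\vdash d$, run the resulting spectral sequence, and induct on $d$ using the per-stratum product structure. One clarification that dissolves your ``secondary subtlety'': by unique factorization into irreducibles, the stratum $\s$ is \emph{literally} $\prod_{j}\mathrm{Sym}^{m_j(\lambda)}\bigl(\mathrm{Irr}_{j,n}(\CC)\bigr)$---the stratification records the full irreducible factorization (repeated factors handled by the symmetric power), so there is no separate coprimality or squarefree issue, and no Leray--Serre is needed, only K\"unneth plus transfer. The precise range then comes from the spectral sequence converging at page $E_{d-1}$ (losing one degree per page) combined with the inductive hypothesis evaluated at the worst factor degree $j=d-1$.
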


$\ird$ is a complex manifold and satisfies Poincar\'e duality for compactly supported cohomology.  Theorem \ref{high stab} equivalently says that the cohomology of $\ird$ stabilizes in fixed codimensions as $n\to\infty$. Hence, Theorem \ref{homo stab low} and Theorem \ref{high stab} cover different ranges of the cohomology of $\ird$.

Unlike Theorem \ref{homo stab low}, Theorem \ref{high stab} only shows cohomological stability without telling us what the stable cohomology is. In the last part of the paper, we will study the limit
$$b_i(d):=\lim_{n\to\infty}\dim H^i_c(\ird;\QQ).$$
We prove that $b_i(d)=0$ when $i\le 2d$ and $d\ge2$ (Corollary \ref{van limit}). However, $b_i(d)$ are generally nonzero when $i$ is large enough. As examples, in the Appendix we compute $b_i(d)$ for all $i$ in the range $d\le 3$ and showed that $b_{11}(4)=1$. 

Our methods are topological and do not use the Grothendieck-Lefschetz trace formula. We will consider a stratification of the space of polynomials according to how they factor (Section \ref{pre}), and  then analyze the spectral sequence induced by the stratification (Section 4, 5 and 6).

\begin{rmk}[\textbf{Related works}]
Hyde (Theorem 1.22 in \cite{Hyde2}) recently proved that the compactly supported Euler characteristic of $\ird$ is 0 when $d>1$. Note that $\chi_c(\ird)=\chi(\ird)$ by Poincar\'e duality.  Since the stable cohomology of $\ird$ as in Theorem \ref{homo stab low} is supported in even degrees, we expect $\ird$ to have  nonzero odd cohomology groups in the unstable range.

Tommasi \cite{Tommasi} proved that the rational cohomology of the space $X_{d,n}$ of nonsingular complex homogeneous polynomials of degree $d$ in $n+1$ variables stabilizes as $d\to\infty$. Since the defining equation of any nonsingular hypersurface is irreducible,  $\ir_{d,n+1}(\CC)$ contains the projectivized $X_{d,n}/\CC^\times$. 
Comparing Theorem \ref{homo stab low} and Tommasi's result, we see that even though $\ird$ and $X_{d,n}/\CC^\times$ both satisfy cohomological stability as $d\to\infty$, their stable cohomology groups are different: Tommasi's theorem implies that the stable cohomology of $X_{d,n}/\CC^\times$ is isomorphic to the cohomology of $\mathrm{PGL}_{n+1}(\CC)$ which is generated by classes with odd degrees; in contrast, Theorem \ref{homo stab low} tells us that the stable cohomology of $\ird$ is supported in even degrees.


The theme of this paper is close to that of Farb-Wolfson-Wood \cite{FWW}, where they proved surprising coincidences in the Poincar\'e series of certain apparently unrelated spaces, which were predicted by the corresponding point-counting results over finite fields (Theorem 1.2 in \cite{FWW}). Our Theorem \ref{homo stab low} and \ref{high stab}, as well as the reasoning that leads us to discover them, provide another example where the Grothendieck-Lefschetz trace formula, despite not playing any role in the proofs, can still provide heuristics leading to plausible conjectures, which are then settled by topological methods. 
\end{rmk}

\section*{Acknowledgement}
The author would like to thank Ronno Das, Nir Gadish, and Trevor Hyde for helpful conversations, and thank an anonymous referee for pointing out an error in an earlier version of the paper.

\section{From counting to cohomology}
\label{why}
We will  briefly explain the heuristic that leads us to ask Question \ref{q1} and \ref{q2} from Carlitz' and Hyde's counting results. Our reasoning here was inspired by the  work of Church-Ellenberg-Farb \cite{CEF}.

For $X$ a variety over $\ZZ$, the Grothendieck-Lefschetz trace formula gives
\begin{equation}\label{GL}
    |X(\fq)| = \sum_i (-1)^i \mathrm{Trace}\Big(\mathrm{Frob}_q:\  H^i_\text{\'et,c}(X_{/{\overline{\fq}}};\QQ_{\ell})\Big)
\end{equation}
where $X(\fq)$ is the set of $\fq$-points on $X$, and the right hand side involves the trace of Frobenius acting on the compactly supported \'etale cohomology of $X$ over $\overline{\fq}$ with $\QQ_\ell$-coefficient for $\ell$ a prime not dividing $q$. Deligne proved that all the eigenvalues of Frobenius on $H^i_\text{\'et,c}(X;\QQ_{\ell})$ have absolute values no more than $q^{i/2}$ (Th\'eor\`eme 2 in \cite{Deligne}). 

For the sake of heuristic reasoning, let us suppose that there is a variety $X_{d,n}$ over $\ZZ$ such that $X_{d,n}(\fq)=\irdf$ and $X_{d,n}(\CC)=\ird$. Since Hyde proved that $|\irdf|$ is a polynomial in $q$, the Grothendieck-Lefschetz trace formula (\ref{GL}) together with Deligne's bounds would tell us that roughly the low $q$-powers in the polynomial $|\irdf|$ come from $H^i_c(\ird;\QQ)$ for $i$ small. Since Hyde (\ref{Hyde}) proved that the low $q$-powers in $|\irdf|$ converge as $n\to\infty$, one would expect that $H^i_c(\ird;\QQ)$ should stabilize as $n$ increases. Similarly, Carlitz (\ref{Car}) proved that the high $q$-powers in $|\irdf|$ converge as $d\to\infty$. One would therefore expect that $H^i(\ird;\QQ)$ should stabilize as $d$ increases, after applying Poincar\'e duality. These are the reasons why we ask Question \ref{q1} and \ref{q2} and expect positive answers.

\begin{rmk}[\textbf{Counting geometrically irreducible polynomials}]
It turns out that the variety $X_{d,n}$ satisfying our assumptions above does not exist. However, there does exist a variety $Y_{d,n}$ over $\ZZ$ such that $Y_{d,n}(\CC)=\ird$ and $Y_{d,n}(\fq)$ is the set of \emph{geometrically} irreducible polynomials, namely, polynomials over $\fq$ that cannot be written as a nontrivial product of polynomials over $\overline{\fq}$. Moreover, $|Y_{d,n}(\fq)|$ can be expressed in terms of $|\ir_{d/e,n}(\ff_{q^e})|$ for $e$ divisors of $d$. Hyde (personal communication) verified that $|Y_{d,n}(\fq)|$  satisfies the same convergence phenomena as $|\irdf|$. Therefore, one can make the heuristic reasoning above a rigorous argument if one replaces $|\irdf|$ by $|Y_{d,n}(\fq)|$, although we will not adopt this approach in the present paper.
\end{rmk}

\section{Preliminary lemmas}
\label{pre}
We first prove some preliminary results that will be used later in the paper. The results we collect here can be viewed as topological analogs of Lemma 2.1 in \cite{Hyde}.

Consider the space 
$$\pl_{\le d,n}(\CC):=\{\text{nonzero complex polynomials in $n$ variables with total degree $\le d$}\}/\CC^\times.$$
Note that $\pl_{\le d,n}(\CC)= \PP^{{d+n\choose n}-1}$ because there are $d+n\choose n$ many monomials of degree $\le d$ in $n$ variables. Next define $\pld:=\pl_{\le d,n}(\CC)\setminus \pl_{\le d-1,n}(\CC)$. This is the space of normalized multivariate polynomials with total degree $d$. 
\begin{lemma}\label{projective}
For any $d$ and $n$, we have a homeomorphism:
$$\pld\cong \CC^{\binom{d+n-1}{n}}\times \PP^{\binom{d+n-1}{n-1}-1}.$$
Thus, $\pld$ is homotopy equivalent to $\PP^{\binom{d+n-1}{n-1}-1}$.
\end{lemma}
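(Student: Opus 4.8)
The plan is to realize $\pld$ as the total space of a fiber bundle over a projective space whose fibers are affine spaces, and then to trivialize that bundle. Set $N=\binom{d+n-1}{n-1}$, the number of monomials of total degree exactly $d$ in $n$ variables, and $M=\binom{d+n-1}{n}=\binom{(d-1)+n}{n}$, the number of monomials of total degree at most $d-1$. Every polynomial of total degree exactly $d$ decomposes uniquely as $f=f_d+g$, where $f_d\neq 0$ is its homogeneous part of degree $d$ and $\deg g\leq d-1$; this identifies the space of such polynomials, before quotienting, with $(\CC^{N}\setminus\{0\})\times\CC^{M}$, on which $\CC^\times$ acts by simultaneous rescaling, so $\pld\cong\bigl((\CC^{N}\setminus\{0\})\times\CC^{M}\bigr)/\CC^\times$. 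Equivalently, since a polynomial of degree $\leq d$ has degree $\leq d-1$ exactly when all of its degree-$d$ coefficients vanish, $\pl_{\le d-1,n}(\CC)$ is a linear subspace $\PP^{M-1}\subset \pl_{\le d,n}(\CC)=\PP^{M+N-1}$, and $\pld$ is the complement $\PP^{M+N-1}\setminus\PP^{M-1}$.

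Next I would build the bundle. Sending $[f]$ to the class $[f_d]$ of its leading form gives a map $\pi\colon\pld\to\PP^{N-1}$. Over the standard chart $U_i\subset\PP^{N-1}$ on which the $i$-th degree-$d$ coefficient does not vanish, rescaling $f$ so that this coefficient equals $1$ and then recording the pair (leading form, lower-order part $g$) gives a homeomorphism $\pi^{-1}(U_i)\cong U_i\times\CC^{M}$, and the transition maps between two such charts act on the $\CC^{M}$-factor by multiplication by a ratio of coordinate functions. Hence $\pi$ is a locally trivial bundle with fiber $\CC^{M}$ (concretely, the $M$-fold Whitney sum of a line bundle on $\PP^{N-1}$). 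Since the fiber $\CC^{M}$ is contractible, $\pi$ is a homotopy equivalence, which already yields the last sentence of the Lemma: $\pld\simeq\PP^{N-1}$.

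The substantive step, and the one I expect to be the main obstacle, is to upgrade this to a genuine homeomorphism $\pld\cong\CC^{M}\times\PP^{N-1}$, i.e. to produce a global trivialization of $\pi$. Concretely one wants a $\CC^\times$-invariant map $(\CC^{N}\setminus\{0\})\times\CC^{M}\to\CC^{M}$ that restricts to a linear isomorphism on the $\CC^{M}$-coordinates along each scaling orbit; pairing it with $\pi$ would give the homeomorphism. I would try to write such a map down by hand, dividing the $\CC^{M}$-coordinate by an everywhere-nonvanishing, necessarily non-holomorphic, weight-one expression in the $\CC^{N}$-variables (for instance one built from a Hermitian norm). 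The care needed here is real: the underlying line bundle on $\PP^{N-1}$ is not trivial as an algebraic, or even a topological, vector bundle, so no algebraic trivialization exists; any honest trivialization must be non-algebraic, and one must verify that the resulting map is truly a homeomorphism onto $\CC^{M}\times\PP^{N-1}$ and not merely a proper surjection. By contrast, the homotopy equivalence $\pld\simeq\PP^{N-1}$ established above is unconditional and is what the later arguments ultimately rely on.
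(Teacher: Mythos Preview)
Your bundle description is correct and more careful than the paper's own argument. The paper simply writes $f=f_d+f_{<d}$ and asserts that $f\mapsto(f_{<d},[f_d])$ is the desired homeomorphism; but, as your setup makes clear, $f_{<d}\in\CC^M$ is not invariant under the $\CC^\times$-action, so this map is not well-defined on $\pld$.

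Your worry about the homeomorphism is not merely a gap to be filled: the homeomorphism claim is in fact false. Take $n=2$, $d=1$, so $M=1$ and $N=2$; then $\pl_{1,2}(\CC)=\PP^{2}\setminus\{\text{pt}\}$, while the asserted product is $\CC\times\PP^{1}$. These are not homeomorphic: the cup pairing $H^2_c\otimes H^2_c\to H^4_c\cong\ZZ$ is $\langle 1\rangle$ on $\PP^{2}\setminus\{\text{pt}\}$ (restrict from $\PP^2$, where the hyperplane class squares to the point class) but $\langle 0\rangle$ on $\CC\times\PP^{1}$ (the generator of $H^2_c$ is pulled back from $H^2_c(\CC)$, which squares to zero). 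In your language, the bundle is $\mathcal{O}(1)^{\oplus M}$, and no non-holomorphic normalization can supply a global weight-one scalar function on $\CC^N\setminus\{0\}$: that would trivialize $\mathcal{O}(1)$ as a topological line bundle, contradicting $c_1(\mathcal{O}(1))\neq 0$. So the obstacle you anticipated is fatal, and you should not try to push that step further.

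What survives, and what the paper actually uses, is exactly what you established unconditionally: $\pi$ exhibits $\pld$ as an affine-space bundle over $\PP^{N-1}$, hence a homotopy equivalence $\pld\simeq\PP^{N-1}$; and $H^*_c(\pld)$ agrees with $H^*_c(\CC^M\times\PP^{N-1})$, which one can read off either from the Leray spectral sequence of $\pi$ or directly from the long exact sequence of the closed inclusion $\PP^{M-1}\subset\PP^{M+N-1}$. Every later invocation of the lemma in the paper (for $H_*(\pld;\ZZ)$ and for the vanishing of $H^i_c(\pld;\QQ)$ in low degrees) uses only these weaker consequences.
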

\begin{proof}
Observe that any $f\in\pld$ can be written uniquely as 
$$f = f_d+f_{<d}$$
where $f_d$ is a homogeneous polynomial of degree $d$ (up to scalar) and $f_{<d}$ is an arbitrary polynomial of degree $<d$. The map $f\mapsto (f_{<d},f_d)$ gives the isomorphism.
\end{proof}

Define $$\rd:=\{f\in\pld:\text{$f$ is reducible}\}$$
which is a closed subspace of $\pld$ with open complement $\ird$. We have a long exact sequence:
\begin{equation}\label{les}
    \cdots \to H^i_c(\ird;\Z)\to H^i_c(\pld;\Z)\to H^i_c(\rd;\Z)\to \cdots
\end{equation}

Every $f\in\pld$ can be factorized uniquely into a product of  irreducible polynomials up to scalars $f=f_1f_2\cdots f_l$, which gives a unique partition $\ld_f$ of the integer $d=\deg f$ by
$$\ld_f: \deg(f_1)+\deg(f_2)+\cdots +\deg(f_l)=d.$$
For any partition $\ld$ of $d$ (written as $\ld\vdash d$ in the future), we define the following subspace of $\pld$:
$$\s:=\{f\in\pld : \ld_f=\ld\}.$$
We use $\sym^m X$ to denote the \emph{$m$-th symmetric power} of a topological space $X$. So $\sym^m X:=X^m/{S}_m$  where the symmetric group ${S}_m$ acts on $X^m$ by permuting the coordinates.  For $\ld\vdash d$ and for $j\in\Z_{>0}$, we will let $m_j(\ld)$ denote the multiplicity of $j$ in $\ld$. Every polynomial $f\in \s$ can be factorized uniquely into $\prod_{j=1}^d f_{j,1}f_{j,2}\cdots f_{j,m_j}$ where each $f_{j,k}\in \ir_{j,n}(\CC)$, up to reordering. Thus, we have
\begin{equation}\label{sym}
    \s\cong\prod_{j=1}^d\mathrm{Sym}^{m_j(\ld)}\Big(\ir_{j,n}(\CC)\Big).
\end{equation}
The unique factorization of  polynomials gives the following decomposition of $\pld$ into disjoint subsets:
\begin{equation}\label{strata}
    \pld = \bigcup_{\ld\vdash d} \s
\end{equation}
Let $(d)$ denote the trivial partition with a single part. Notice that $T_{(d),n}=\ird$.

We will focus on the decomposition of the space of reducible polynomials:
\begin{equation}\label{red strata}
    \rd = \bigcup_{\ld\vdash d, |\ld|\ge2} \s
\end{equation}
where $|\ld|:=\sum_j m_j(\ld)$ denote the total number of parts in the partition $\ld$.

\begin{lemma}\label{ss}
There is a spectral sequence 
\begin{equation}\label{spec}
    E_1^{p,q} = \bigoplus_{\ld:\ \ld\vdash d, |\ld|=d-p\ge2} H_c^{p+q}(\s;\Z)\ \Longrightarrow\ H^{p+q}_c(\rd;\Z).
\end{equation}
Moreover, its convergence happens at $E_{d-1}=E_\infty.$
\end{lemma}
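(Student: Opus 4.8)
The plan is to build the spectral sequence from the filtration of $\rd$ coming from the stratification \eqref{red strata}, indexed by the number of parts $|\ld|$ of the partition. First I would order the strata by $|\ld|$: set $F_k := \bigcup_{\ld\vdash d,\ |\ld|\ge d-k} \s$ for $k=0,1,\dots,d-2$, so that $F_0 = T_{(1^d),n} = \sym^d(\ir_{1,n}(\CC))$ is closed (it is the stratum with the most parts, hence lowest-dimensional — actually one should double-check whether it is closed or open; the key point is that unions of strata indexed by an upward-closed set of $|\ld|$-values are closed in $\rd$, since a limit of reducible polynomials factoring into $\ell$ pieces can only factor into $\ge\ell$ pieces). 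This gives a filtration $\emptyset = F_{-1}\subset F_0 \subset F_1 \subset \cdots \subset F_{d-2} = \rd$ by closed subspaces, with $F_k \setminus F_{k-1} = \bigsqcup_{|\ld| = d-k} \s$.

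The standard machinery of the compactly supported cohomology spectral sequence of a filtration by closed subspaces then applies: there is a spectral sequence with $E_1^{p,q} = H^{p+q}_c(F_p\setminus F_{p-1};\Z)$ converging to $H^{p+q}_c(\rd;\Z)$, where the differentials $d_r$ have bidegree $(r, 1-r)$. (This comes from splicing together the long exact sequences $\cdots\to H^i_c(F_p\setminus F_{p-1})\to H^i_c(F_p)\to H^i_c(F_{p-1})\to\cdots$, or equivalently from the Leray spectral sequence / the exact couple built from the pairs $(F_p,F_{p-1})$.) Since $F_p\setminus F_{p-1}$ is the disjoint union of the $\s$ with $|\ld| = d-p$, we get $E_1^{p,q} = \bigoplus_{\ld\vdash d,\ |\ld| = d-p} H^{p+q}_c(\s;\Z)$, which is exactly \eqref{spec}. (The condition $|\ld|\ge 2$ enters because the stratum $T_{(d),n} = \ird$ with $|\ld| = 1$ is excised: it is the open complement of $\rd$ in $\pld$, not part of $\rd$.) I should take care to match the indexing convention so that $p$ really is $d - |\ld|$, reindexing the filtration if necessary.

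For the convergence claim $E_{d-1} = E_\infty$, the point is simply that the filtration has finite length: $|\ld|$ ranges over $2,3,\dots,d$, so $p = d - |\ld|$ ranges over $0,1,\dots,d-2$, meaning $E_1^{p,q} = 0$ unless $0\le p\le d-2$. A differential $d_r\colon E_r^{p,q}\to E_r^{p+r,q-r+1}$ out of or into column $p$ with source and target both in the range $[0,d-2]$ requires $r\le d-2$; for $r = d-1$ every such differential has either source or target outside $[0,d-2]$, hence vanishes. Therefore $E_{d-1}^{p,q} = E_\infty^{p,q}$ for all $p,q$.

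**Main obstacle.** The only genuinely non-formal input is verifying that the filtration $F_0\subset F_1\subset\cdots$ is by \emph{closed} subspaces of $\rd$ — equivalently, that the closure of $\s$ meets only strata $T_{\mu,n}$ with $|\mu|\ge|\ld|$. This is the semicontinuity statement that "the number of irreducible factors can only go up under specialization," and it is what makes the compactly supported cohomology long exact sequences (and hence the exact couple) available in the required direction. Once that is in hand, the construction of the spectral sequence and its collapse at $E_{d-1}$ are entirely formal; I would not expect either to present difficulty beyond bookkeeping of indices.
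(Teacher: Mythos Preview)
Your proposal is correct and follows essentially the same approach as the paper: the paper builds the identical increasing filtration by closed subspaces (with a one-step shift in indexing, $\mathcal{F}_p = \bigcup_{|\ld|\ge d+1-p}\s$), justifies closedness via the same semicontinuity argument that a limit of polynomials factoring into a given pattern can only factor further, passes to the induced decreasing filtration on $C^*_c$, and reads off the $E_1$-page and the collapse at $E_{d-1}$ from the bound $0\le p\le d-2$. The only point the paper makes slightly more explicit than you do is that for two distinct partitions $\ld,\mu$ of the \emph{same} size, neither refines the other, so the set-theoretic disjoint union $F_p\setminus F_{p-1}=\bigcup_{|\ld|=d-p}\s$ is in fact a topological disjoint union.
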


\begin{proof}
Consider the following increasing filtration of $\rd$:
\begin{equation}\label{fil}
    \emptyset=\mathcal{F}_{0}\subset\mathcal{F}_{1}\subset\cdots\subset\mathcal{F}_{d-1}=\rd\ \ \ \ \text{ where each }\ \ \ \  \mathcal{F}_p := \bigcup_{\ld:\ |\ld|\ge d+1-p}\s.
\end{equation}
We claim that each $\mathcal{F}_p$ is a closed subspace of $\pld$. In fact, we have $\s\subseteq \overline{T_{\mu,n}}$ if  $\ld$ is finer than or equal to $\mu$. To see this, notice that if a sequence of polynomials $f_n\in T_{\mu,n}$ converges to a limit $f$, then $f$ can be factorized in the same pattern as each $f_n$ because  being a product is a closed condition. However, the irreducible factors of $f_n$ might become reducible in the limit because  being irreducible is an open condition. Hence, $\ld_f$ is finer than or equal to $\ld_\mu$. 

We will abbreviate the compactly supported cochain complex  $C^*_c(\rd;\Z)$ simply as $C^*$. The increasing filtration $\{\mathcal{F}_{p}\}$ of $\rd$ induces a decreasing filtration $\{\mathcal{G}^{p} C^*\}$ of $C^*$:
$$    C^*=\mathcal{G}^{0}C^*\supset\mathcal{G}^{1}C^*\supset\cdots\supset\mathcal{G}^{d-1}C^*=0 \ \ \ \ \text{ where each }\ \mathcal{G}^{p}C^*:=C^*_c(\rd\setminus \mathcal{F}_{p};\Z).
$$
Since each $\mathcal{F}_p$ is a closed subspace of $\rd$, we have $$\frac{\mathcal{G}^{p}C^*}{\mathcal{G}^{p+1}C^*}
\cong C_c^*(\mathcal{F}_{p+1}\setminus \mathcal{F}_{p};\Z).$$
 Thus, the filtered complex $\{\mathcal{G}^{p} C^*\}$ induces a spectral sequence with $E_1$-page:
$$E_1^{p,q}=H_c^{p+q}(\mathcal{F}_{p+1}\setminus \mathcal{F}_{p};\Z)\ \ \Longrightarrow\ \ \ H^{p+q}_c(\rd;\Z).$$
Finally, by (\ref{fil}) we have
\begin{equation}\label{disjoint}
    \mathcal{F}_{p+1}\setminus \mathcal{F}_{p} = \bigcup_{\ld:\ |\ld|= d-p}\s.
\end{equation}
For any two distinct partitions $\ld$ and $\mu$ of equal size $d-p$, we have $\overline{T_{\mu,n}}\cap \s=\emptyset$ because it is impossible that $\ld$ is finer than $\mu$. Thus, the set-theoretical disjoint union (\ref{disjoint}) is actually a disjoint union of topological spaces. Hence, we obtain the spectral sequence (\ref{spec}).

Notice that $E_1^{p,q}$ is nonzero only when $0\le p\le d-2$. Thus, all $E_r$-differentials are zero when $r\ge d-1$.
\end{proof}

\section{Proof of Theorem \ref{homo stab low}} \label{proof1}
Theorem \ref{homo stab low} in the Introduction will  follow from Theorem \ref{iso} below together with Lemma \ref{projective}.

\begin{thm}
\label{iso}
For $n>1$, the inclusion $\ird\hookrightarrow \pld$ induces an isomorphism
$$H_i(\ird,\Z)\xrightarrow{\cong} H_i(\pld,\Z)$$
when $i\le2\bigg[{{d+n-1}\choose {n-1}}-n-1$\bigg].
\end{thm}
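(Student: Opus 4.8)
The plan is to reduce the statement, via Poincar\'e duality and the long exact sequence (\ref{les}), to an upper bound on the dimension of the reducible locus $\rd$. Both $\pld$ and its open subset $\ird$ are complex --- hence canonically oriented --- manifolds of complex dimension $M:=\binom{d+n}{n}-1$; here $n>1$ is precisely what guarantees $\ird\neq\emptyset$ (over $\CC$ there are no irreducible polynomials of degree $\ge 2$ in one variable). Poincar\'e duality for compactly supported cohomology then gives natural isomorphisms $H_i(\ird;\Z)\cong H^{2M-i}_c(\ird;\Z)$ and $H_i(\pld;\Z)\cong H^{2M-i}_c(\pld;\Z)$, and under these the map induced on homology by the open inclusion $\ird\hookrightarrow\pld$ corresponds, up to sign, to the ``extension by zero'' map $H^{2M-i}_c(\ird;\Z)\to H^{2M-i}_c(\pld;\Z)$ occurring in (\ref{les}). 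So it suffices to show that this latter map is an isomorphism in the claimed range.

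A short computation with Pascal's rule translates that range. Since $\binom{d+n}{n}-\binom{d+n-1}{n-1}=\binom{d+n-1}{n}$, writing $j:=2M-i$ the hypothesis $i\le 2\big[\binom{d+n-1}{n-1}-n-1\big]$ becomes $j\ge 2\binom{d+n-1}{n}+2n$. By the long exact sequence (\ref{les}), the map $H^j_c(\ird;\Z)\to H^j_c(\pld;\Z)$ is an isomorphism as soon as $H^{j-1}_c(\rd;\Z)=H^j_c(\rd;\Z)=0$, so the whole theorem follows once I show
$$H^k_c(\rd;\Z)=0 \qquad\text{for every } k>2\Big(\binom{d+n-1}{n}+n-1\Big).$$
Since a complex algebraic variety of dimension $\delta$ has vanishing compactly supported cohomology above degree $2\delta$ --- or, staying closer to the paper's framework, since the spectral sequence of Lemma \ref{ss} has $E_1^{p,q}=0$ once $p+q$ exceeds twice the maximal dimension of a reducible stratum --- this reduces to the purely dimension-theoretic claim $\dim_\CC\rd\le\binom{d+n-1}{n}+n-1$.

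For that last claim I would use the stratification (\ref{red strata}), giving $\dim_\CC\rd=\max_{\ld\vdash d,\,|\ld|\ge2}\dim_\CC\s$, together with (\ref{sym}) and the fact that $\ir_{j,n}(\CC)$ is a nonempty Zariski-open subset of $\pl_{j,n}(\CC)=\PP^{\binom{j+n}{n}-1}$ (again using $n>1$), to get $\dim_\CC\s=\sum_j m_j(\ld)\big(\binom{j+n}{n}-1\big)$. The key --- and the only genuinely non-formal --- step is then the combinatorial inequality
$$\sum_j m_j(\ld)\left(\binom{j+n}{n}-1\right)\ \le\ \binom{d+n-1}{n}+n-1\qquad\text{whenever } |\ld|\ge2,$$
with equality for $\ld=(d-1,1)$; equivalently, $T_{(d-1,1),n}$ is the top-dimensional stratum of $\rd$. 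I would prove this from two properties of $\phi(x):=\binom{x+n}{n}$, which has nonnegative Taylor coefficients at the origin and hence is convex on $[0,\infty)$: first, $x\mapsto \phi(x)-1$ is superadditive there, which lets me merge all parts of $\ld$ but one without decreasing the sum, reducing to a two-part partition $(a,d-a)$; and second, convexity makes $\phi(a)+\phi(d-a)$ maximal at the endpoint $a=1$. Assembling these pieces finishes the argument; the main obstacle is squarely this dimension estimate, everything before and after it being standard duality and exact-sequence bookkeeping.
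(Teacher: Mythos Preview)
Your proposal is correct and follows essentially the same route as the paper: Poincar\'e duality plus the long exact sequence (\ref{les}) reduce Theorem \ref{iso} to the vanishing statement $H^k_c(\rd;\Z)=0$ for $k>2\big(\binom{d+n-1}{n}+n-1\big)$, which in turn follows from the dimension bound $\dim_\CC\s\le\binom{d+n-1}{n}+n-1$ for all non-singleton $\ld\vdash d$, fed into the spectral sequence of Lemma \ref{ss}. The only cosmetic difference is in how you argue the combinatorial inequality: the paper notes that strict convexity of $j\mapsto\binom{j+n}{n}-1$ (for $n>1$) forces $\dim_\CC\s$ to increase as $\ld$ coarsens, reducing to two-part partitions, whereas you phrase the same reduction via superadditivity of $\phi(x)-1$; both then finish with convexity on the interval $[1,d-1]$ to identify $(d-1,1)$ as the maximizer.
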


\begin{rmk}
In \cite{Car}, Carlitz obtained his result by showing that $|\irdf|\sim|\mathrm{Poly}_{d,n}(\fq)|$ as $d\to\infty$ when $n>1$. Theorem \ref{iso} is a topological analog of Carlitz' observation that ``when the number of indeterminates is greater than one we find that almost all polynomials are irreducible" (\cite{Car}, Section 1). The assumption $n>1$ is needed in our proof below. 
\end{rmk}

\begin{proof}[Proof of Theorem \ref{iso}]
Since $\ird\hookrightarrow\pld$ is an inclusion of complex (hence orientable) manifolds of equal complex dimension $\Big[{{{d+n}\choose n}-1}\Big]$, by Poincar\'e duality, in order to prove Theorem \ref{iso}, it suffices to prove that the inclusion $\ird\hookrightarrow \pld$ induces an isomorphism on compactly supported cohomology
$$H^i_c(\ird,\Z)\xleftarrow{\cong} H^i_c(\pld,\Z)$$
when $i\ge 2\Big[{{{d+n}\choose n}-1}\Big]-2\Big[{{d+n-1}\choose {n-1}}-n-1\Big]=2[\binom{d+n-1}{n}+n]$. By the long exact sequence (\ref{les}), it suffices to prove the following proposition:

\begin{prop}\label{vanishing}
For $n>1$, we have $H^i_c(\rd;\Z)=0$ when $i\ge 2\Big[\binom{d+n-1}{n}+n\Big]-1$.
\end{prop}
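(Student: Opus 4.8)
My plan is to use the spectral sequence from Lemma \ref{ss}, which computes $H^*_c(\rd;\Z)$ from the compactly supported cohomology of the strata $\s$ for $\ld\vdash d$ with $|\ld|\ge 2$. Since the spectral sequence converges, it suffices to show that every term $E_1^{p,q}=\bigoplus_{|\ld|=d-p\ge 2}H^{p+q}_c(\s;\Z)$ vanishes in the range $p+q\ge 2\big[\binom{d+n-1}{n}+n\big]-1$. So the whole problem reduces to a vanishing bound for $H^j_c(\s;\Z)$ for each individual reducible partition $\ld$.

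The key step is therefore to bound the top nonvanishing degree of $H^*_c(\s;\Z)$ from above. Using the decomposition $\s\cong\prod_{j=1}^d\sym^{m_j(\ld)}(\ir_{j,n}(\CC))$ from (\ref{sym}), together with the Künneth formula for compactly supported cohomology (all spaces here are manifolds, so this is fine with $\Z$-coefficients after checking torsion, or one can work rationally and recover the integral statement — but actually $\Z$ is fine since these are even-dimensional oriented manifolds in the relevant range, or use the Künneth spectral sequence), I reduce to bounding $H^*_c(\sym^m(\ir_{j,n}(\CC)))$. The real dimension of $\sym^m(\ir_{j,n}(\CC))$ equals $m$ times the real dimension of $\ir_{j,n}(\CC)$, which is $2m[\binom{j+n}{n}-1]$. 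Since compactly supported cohomology of a space of real dimension $D$ vanishes above degree $D$, the top degree of $H^*_c(\s;\Z)$ is at most $\sum_{j=1}^d 2m_j(\ld)\big[\binom{j+n}{n}-1\big] = 2\big[\sum_j m_j(\ld)\binom{j+n}{n}\big]-2|\ld|$. The crux is then a purely combinatorial estimate: I must show that for every partition $\ld\vdash d$ with $|\ld|\ge 2$,
\[
\sum_{j=1}^d m_j(\ld)\binom{j+n}{n} - |\ld| \;\le\; \binom{d+n-1}{n}+n-1,
\]
which (after doubling) gives exactly the claimed bound, with a strict inequality or a drop by one accounting for the $-1$ in the exponent of the Proposition. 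This is where the hypothesis $n>1$ must enter: the inequality can fail for $n=1$ (where $\binom{j+n}{n}=j+1$ is nearly linear), but for $n>1$ the function $j\mapsto\binom{j+n}{n}$ is strictly convex enough that splitting $d$ into two or more parts strictly decreases $\sum_j m_j\binom{j+n}{n}$ compared to the single part $\binom{d+n}{n}$ — and the worst reducible case is the partition $(d-1,1)$, for which $\binom{d-1+n}{n}+\binom{1+n}{n}-2 = \binom{d-1+n}{n}+n-1$, matching the bound exactly.

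I expect the main obstacle to be making the combinatorial inequality tight and verifying it handles all reducible partitions uniformly, not just the extreme one $(d-1,1)$; concretely, one wants a lemma of the shape: for $n>1$ and any way of writing $d=a+b$ with $a,b\ge 1$, one has $\binom{a+n}{n}+\binom{b+n}{n}\le\binom{d+n-1}{n}+\binom{1+n}{n}$, with an induction on $|\ld|$ reducing the general partition to this two-part case by repeatedly merging parts (merging $a,b\mapsto a+b$ only increases the sum when $n>1$, by super-additivity of $\binom{\cdot+n}{n}-1$, so the coarsest reducible partition $(d-1,1)$ is extremal). A secondary, more routine point is justifying the Künneth and the $\dim_\R$-vanishing with $\Z$-coefficients — here I would either invoke that each $\ir_{j,n}(\CC)$ is a smooth oriented manifold so that $H^*_c$ vanishes above the dimension and Künneth holds, or simply note that the universal coefficient and Künneth spectral sequences only spread cohomology downward in degree, which is all that is needed for a vanishing-above-a-bound statement.
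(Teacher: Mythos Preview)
Your proposal is correct and follows essentially the same route as the paper: feed the strata into the spectral sequence of Lemma~\ref{ss}, then bound the top nonvanishing degree of $H^*_c(\s;\Z)$ by the real dimension, and finally prove the combinatorial inequality $\sum_j m_j(\ld)\big[\binom{j+n}{n}-1\big]\le \binom{d+n-1}{n}+n-1$ by using strict convexity of $j\mapsto\binom{j+n}{n}-1$ for $n>1$ to reduce to the two-part case and then check that the extremal two-part partition is $(d-1,1)$. One minor simplification compared to your write-up: you do not need K\"unneth (or any worry about $\Z$-coefficients) at all, since $\s$ itself is an orbifold of complex dimension $\sum_j m_j(\ld)\big[\binom{j+n}{n}-1\big]$ by (\ref{sym}), and compactly supported cohomology of any such space vanishes above its real dimension directly; the paper packages this as Lemma~\ref{bound} and then finishes in two lines.
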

Before proving Proposition \ref{vanishing}, we will first prove the following lemma:
\begin{lemma}\label{bound}
For any partition $\ld$ of $d$ such that $|\ld|\ge2$, we have $\dim_\CC(\s)\le\binom{d+n-1}{n}+n-1$.
\end{lemma}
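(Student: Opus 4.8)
The plan is to bound $\dim_\CC(\s)$ using the product decomposition (\ref{sym}). Since $\s\cong\prod_{j=1}^d\sym^{m_j(\ld)}(\ir_{j,n}(\CC))$, and a symmetric power $\sym^m X$ of a complex variety $X$ has complex dimension $m\cdot\dim_\CC X$, we get $\dim_\CC(\s)=\sum_j m_j(\ld)\dim_\CC(\ir_{j,n}(\CC))$. Now $\ir_{j,n}(\CC)$ is an open subset of $\pl_{j,n}(\CC)$, which by Lemma \ref{projective} has complex dimension $\binom{j+n-1}{n}+\binom{j+n-1}{n-1}-1=\binom{j+n}{n}-1$. So the estimate reduces to showing
\[
\sum_{j=1}^d m_j(\ld)\left[\binom{j+n}{n}-1\right]\ \le\ \binom{d+n-1}{n}+n-1
\]
for every partition $\ld\vdash d$ with at least two parts.

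The key combinatorial input is a superadditivity property of the function $g(j):=\binom{j+n}{n}-1$, namely that for $a,b\ge 1$ one has $g(a)+g(b)\le g(a+b)$ when $n\ge 1$; this follows since $\binom{a+n}{n}+\binom{b+n}{n}\le\binom{a+b+n}{n}+1$, which in turn comes from the fact that the number of monomials of degree $\le a$ plus the number of degree $\le b$ (in $n\ge 2$ variables) is at most one more than the number of degree $\le a+b$ — a monomial of degree $\le a$ and one of degree $\le b$ multiply to one of degree $\le a+b$, and this pairing is injective away from the pair $(1,1)\mapsto 1$. Iterating, $\sum_j m_j(\ld)\,g(j)\le g(d)=\binom{d+n}{n}-1$ whenever $|\ld|\ge 2$, but in fact we can do better: peeling off the largest part gives a sharper bound. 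Writing $\ld$ with largest part $j_0\le d-1$ (since $|\ld|\ge 2$), superadditivity applied to the remaining parts gives $\sum_j m_j(\ld)g(j)\le g(j_0)+g(d-j_0)\le g(d-1)+g(1)=\left[\binom{d+n-1}{n}-1\right]+\left[\binom{n+1}{n}-1\right]=\binom{d+n-1}{n}+n-1$, which is exactly the claimed bound.

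The main obstacle is pinning down the superadditivity inequality $g(a)+g(b)\le g(a+b)$ cleanly and verifying that equality (or the worst case) is achieved precisely at the partition $(d-1,1)$, so that the bound in the lemma is sharp and the "$+n-1$" constant is correct rather than something weaker. One has to check that $g(d-1)+g(1)$ genuinely dominates $g(j_0)+g(d-j_0)$ for all $1\le j_0\le d-1$, i.e. that $j\mapsto g(j)+g(d-j)$ is maximized at the endpoints; this is a convexity statement for $g$ (true since $\binom{j+n}{n}$ is a polynomial in $j$ of degree $n\ge 2$ with positive leading coefficient, hence convex on $[0,d]$), so the maximum of the sum of the two convex pieces over the interval is at an endpoint. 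Assembling these pieces — the dimension formula for symmetric powers, Lemma \ref{projective}, superadditivity, and the endpoint maximization — yields the lemma. I expect the superadditivity/convexity bookkeeping to be the only place requiring care; everything else is a direct substitution.
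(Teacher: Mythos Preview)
Your proposal is correct and follows essentially the same approach as the paper: both compute $\dim_\CC(\s)$ via the product formula~(\ref{sym}), use convexity (equivalently, superadditivity) of $g(j)=\binom{j+n}{n}-1$ to reduce to two-part partitions, and then use convexity again to show that $g(k)+g(d-k)$ is maximized at the endpoint $k=1$, giving exactly $\binom{d+n-1}{n}+n-1$. Your combinatorial sketch of superadditivity via an ``injective pairing'' of monomials is not quite right as written, but the convexity route you also invoke (and which the paper uses directly) settles it cleanly since $g(0)=0$ and $g$ is convex for $n\ge 2$.
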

\begin{proof}
$\ird$ is an open subset of $\pld$ and thus is a manifold of complex dimension $\Big[{{{d+n}\choose n}-1}\Big]$. Hence, $\sym^m(\ird)$ is a orbifold (\emph{i.e.} a manifold quotient by a finite group action) of complex dimension $m\Big[{{{d+n}\choose n}-1}\Big]$. By (\ref{sym}), each $\s$ is also an orbifold with dimension
\begin{equation}
\label{dim}
    \dim_\CC(\s)=\sum_{j=1}^d m_j(\ld) \bigg[\binom{j+n}{n}-1\bigg].
\end{equation}
Since the function $\binom{j+n}{n}-1$ is strictly convex in $j$ when $n>1$, we have $ \dim_\CC(\s)< \dim_\CC(T_{\mu,n})$ if $\ld$ is strictly finer than $\mu$. Therefore,  $\dim_\CC (\s)$ is maximized at some partition $\ld$ of size exactly 2. Hence, it suffices to consider $\ld$ to be of the form $k+(d-k)$ for some integer $k=1,\cdots,d-1$. For such $\ld$, we have
$$\dim_\CC(\s)=\binom{k+n}{n}+\binom{d-k+n}{n}-2=:f(k).$$
By checking its second derivative, the function $f(k)$ is strictly convex for $k\in[1,d-1]$ and thus the only possible local maximum occur at the two endpoints. Hence, for any $k\in[1,d-1]$, we have $f(k)\le f(1)=f(d-1)$.
\end{proof}

\begin{proof}[Proof of Proposition \ref{vanishing}]
Consider the spectral sequence in Lemma \ref{ss}:
$$  E_1^{p,q} = \bigoplus_{\ld:\ \ld\vdash d, |\ld|=d-p\ge2} H_c^{p+q}(\s;\Z)\ \Longrightarrow\ H^{p+q}_c(\rd;\Z).$$
Lemma \ref{bound} implies that $E_1^{p,q}=0$ when $p+q>2\Big[\binom{d+n-1}{n}+n-1\Big]$. Thus, we have
$$H^{i}_c(\rd;\Z)=0$$
 when $i>2\Big[\binom{d+n-1}{n}+n-1\Big]$.
\end{proof}
Theorem \ref{iso} now follows from Proposition \ref{vanishing}.
\end{proof}
%
%
%
%

\section{Proof of Theorem \ref{high stab}}

\subsection{Preliminary results} We first obtain some preliminary results to be used later in the proof.

\begin{lemma}\label{red ird}
For any $n$ and $d>1$, and for any $i$ in the range as stated in Theorem \ref{high stab}, the natural connecting homomorphism is an isomorphism:
\begin{equation}\label{delta}
    H^{i}_c(\rd;\QQ)\xrightarrow{\cong} H^{i+1}_c(\ird;\QQ).
\end{equation}
\end{lemma}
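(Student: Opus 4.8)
The plan is to use the long exact sequence (\ref{les}) and show that the remaining terms $H^i_c(\pld;\QQ)$ and $H^{i+1}_c(\pld;\QQ)$ both vanish in the stated range, so that the connecting homomorphism $\delta\colon H^i_c(\rd;\QQ)\to H^{i+1}_c(\ird;\QQ)$ is sandwiched between zeros and hence an isomorphism. By Lemma \ref{projective}, $\pld$ is homotopy equivalent to $\PP^{N-1}$ with $N=\binom{d+n-1}{n-1}$, but for compactly supported cohomology one must remember the $\CC^{\binom{d+n-1}{n}}$ factor: writing $M=\binom{d+n-1}{n}$, the Künneth formula for compactly supported cohomology gives $H^k_c(\pld;\QQ)\cong H^{k-2M}_c(\PP^{N-1};\QQ)$, which is $\QQ$ when $k$ is even and $2M\le k\le 2M+2(N-1)$, and $0$ otherwise. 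In particular $H^k_c(\pld;\QQ)=0$ whenever $k<2M=2\binom{d+n-1}{n}$.

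So the key step is the numerical check that the range $i<\frac{2n}{d-1}-\frac{(d-2)(d-3)}{2}-1$ appearing in Theorem \ref{high stab} forces $i+1<2\binom{d+n-1}{n}$, which then kills both $H^i_c(\pld;\QQ)$ and $H^{i+1}_c(\pld;\QQ)$ (note $i<i+1$). Since $\binom{d+n-1}{n}$ is a degree-$d$ polynomial in $n$ while the stated bound is linear in $n$, this inequality is comfortably true for all $n\ge 1$ and $d\ge 2$; concretely, $\binom{d+n-1}{n}\ge n+1$ already for $d\ge 2$, so $2\binom{d+n-1}{n}\ge 2n+2$, which dominates the linear bound $\frac{2n}{d-1}-\frac{(d-2)(d-3)}{2}$ since $\frac{2n}{d-1}\le 2n$. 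This is the only thing to verify, and it is routine.

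There is no serious obstacle here: the lemma is essentially the observation that in the relevant low range of degrees $\pld$ carries no compactly supported cohomology at all (its compact cohomology is concentrated in degrees $\ge 2\binom{d+n-1}{n}$), so the long exact sequence (\ref{les}) immediately identifies $H^*_c(\rd;\QQ)$ with $H^{*+1}_c(\ird;\QQ)$ in that range. The one point requiring a line of care is bookkeeping the degree shift by $2\binom{d+n-1}{n}$ coming from the Euclidean factor in Lemma \ref{projective}, rather than using the homotopy equivalence with projective space naively. The apparently complicated bound in Theorem \ref{high stab} is not needed in its full strength for this lemma — it is dictated by the later spectral-sequence argument — so here we only use the weak consequence $i+1<2\binom{d+n-1}{n}$.
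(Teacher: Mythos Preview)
Your proposal is correct and follows essentially the same route as the paper's own proof: both use the long exact sequence (\ref{les}) together with the vanishing $H^k_c(\pld;\QQ)=0$ for $k<2\binom{d+n-1}{n}$ coming from Lemma~\ref{projective}, then verify that the bound from Theorem~\ref{high stab} sits below this threshold. Your explicit mention of the K\"unneth shift from the $\CC^{\binom{d+n-1}{n}}$ factor and your slightly slicker numerical check (via $\binom{d+n-1}{n}\ge n+1$ and $\tfrac{2n}{d-1}\le 2n$) are minor presentational differences, not substantive ones.
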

\begin{proof}
Again as in the proof of Theorem \ref{iso} above, the decomposition $\ird = \pld\setminus\rd$ gives the following long exact sequence
$$\cdots\to H^{i}_c(\ird;\QQ)\to H^i_c(\pld;\QQ)\to H^{i}_c(\rd;\QQ)\to \cdots$$
By Lemma \ref{projective}, we have $H^i_c(\pld;\QQ)=0$ when $i<2{n+d-1\choose n}$. Thus, (\ref{delta}) is an isomorphism when $i<2{n+d-1\choose n}-1$. Finally, we need to check that the reasoning above holds in the range of $i$ stated in Theorem \ref{high stab}, which is equivalent to checking that 
$$\frac{2n}{d-1}-1-\frac{(d-2)(d-3)}{2}\le2{n+d-1\choose n}-1.$$
Indeed, when $d>1$, we have
$$\text{LHS}\le \frac{2n}{d-1}-1<\frac{2(n+d-1)}{d-1}-1\le 2\cdot \frac{n+d-1}{d-1}\frac{n+d-2}{d-2}\cdots \frac{n}{1}-1=\text{RHS}.$$
\end{proof}

Next, we prove the following general results about graded vector spaces. 
\begin{lemma}\label{graded vs}
Suppose $f:A\to B$ and $g:C\to D$ are maps of graded vector spaces.  If for any $i\le r$, the maps $f:A_i\xrightarrow[]{\cong} B_i$ and $g:C_i\xrightarrow[]{\cong} D_i$ are isomorphisms on the $i$-th graded pieces, then the following maps
$$ (A\otimes C)_i\xrightarrow[]{f\otimes g} (B\otimes D)_i$$
$$ (A^{\otimes m})^{{S}_m}_i\longrightarrow(B^{\otimes m})^{{S}_m}_i$$
are also isomorphisms for any $i\le r$.
\end{lemma}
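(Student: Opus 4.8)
\textbf{Proof plan for Lemma \ref{graded vs}.}

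The plan is to prove the two statements separately, starting with the tensor product assertion since it is both easier and a building block for the second. For graded vector spaces $A\otimes C$ one has the standard decomposition $(A\otimes C)_i=\bigoplus_{j+k=i}A_j\otimes C_k$, and similarly for $B\otimes D$. Fix $i\le r$; then every index $j$ appearing in this direct sum satisfies $j\le i\le r$, and likewise $k\le r$. By hypothesis $f\colon A_j\xrightarrow{\cong}B_j$ and $g\colon C_k\xrightarrow{\cong}D_k$ are isomorphisms for all such $j,k$, so $f\otimes g$ restricts to an isomorphism $A_j\otimes C_k\xrightarrow{\cong}B_j\otimes D_k$ on each summand (tensor product of isomorphisms of vector spaces is an isomorphism), and taking the direct sum over $j+k=i$ gives the desired isomorphism $(A\otimes C)_i\xrightarrow{\cong}(B\otimes D)_i$.

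For the invariants statement, I would first apply the tensor-product case iteratively (or note the grading $(A^{\otimes m})_i=\bigoplus_{j_1+\cdots+j_m=i}A_{j_1}\otimes\cdots\otimes A_{j_m}$, where again every $j_\ell\le i\le r$) to conclude that the induced map $\phi\colon (A^{\otimes m})_i\to(B^{\otimes m})_i$ is an isomorphism for $i\le r$. The map $\phi$ is $S_m$-equivariant because $f^{\otimes m}$ is built symmetrically from $f$ in each factor, so permuting tensor factors commutes with applying $f$ in every factor. It then remains to pass to invariants. Here I would use that we are working over a field of characteristic $0$ (the lemma is stated for $\QQ$-vector spaces, and indeed it is applied to rational cohomology), so that taking $S_m$-invariants is an exact functor — equivalently, the averaging (Reynolds) operator $\frac{1}{m!}\sum_{\sigma\in S_m}\sigma$ is a projection onto the invariant subspace that is natural in $S_m$-representations. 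Consequently the diagram
\[
\begin{array}{ccc}
(A^{\otimes m})_i & \xrightarrow{\ \phi\ } & (B^{\otimes m})_i\\
\downarrow & & \downarrow\\
(A^{\otimes m})^{S_m}_i & \xrightarrow{\ \ } & (B^{\otimes m})^{S_m}_i
\end{array}
\]
with vertical maps the averaging projections commutes, and since an $S_m$-equivariant isomorphism restricts to an isomorphism on invariant subspaces, the bottom arrow is an isomorphism for every $i\le r$.

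The only genuine subtlety — and the place to be careful rather than the main obstacle — is the use of characteristic $0$: without it, invariants of a quotient need not be the quotient of invariants, and an equivariant isomorphism need not induce an isomorphism on invariants. Since the paper works with $\QQ$-coefficients throughout this section, this is harmless, but it is worth flagging that the grading argument alone (every index stays $\le r$) is what makes the finite-range hypothesis suffice; there is no hidden convergence or filtration issue, and no spectral sequence is needed.
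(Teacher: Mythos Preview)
Your argument is correct and follows essentially the same route as the paper: decompose $(A\otimes C)_i$ as a direct sum over $j+k=i$, use $j,k\le i\le r$ to apply the hypothesis summand by summand, iterate for $A^{\otimes m}$, note $S_m$-equivariance, and pass to invariants.

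One minor correction worth flagging: your remark that characteristic~$0$ is needed here, and in particular that ``an equivariant isomorphism need not induce an isomorphism on invariants'' without it, is not right. An equivariant \emph{isomorphism} $\phi\colon V\to W$ always restricts to an isomorphism $V^G\to W^G$ over any ground ring, since $g\cdot\phi(v)=\phi(g\cdot v)$ shows $v\in V^G\iff\phi(v)\in W^G$. The Reynolds operator and exactness of invariants are genuinely needed elsewhere in the paper (e.g.\ for the transfer isomorphism $H^*_c(Y^m)^{S_m}\cong H^*_c(\sym^m Y)$ in Lemma~\ref{sym coh}), but not for this lemma. The paper accordingly just writes ``Taking the $S_m$-invariants, we obtain the second claim'' without invoking characteristic~$0$.
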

\begin{proof}
For any $i\le r$, we have
\begin{align*}
    (A\otimes C)_i&=\bigoplus_{ s+t=i}A_s\otimes C_t\xrightarrow[\cong]{f\otimes g}\bigoplus_{s+t=i}B_s\otimes D_t= (B\otimes D)_i
\end{align*}
since each $s$ and $t$ in the summand are no more than $i$ and hence $r$. Moreover, applying the reasoning above inductively on $m$, we have 
$$(A^{\otimes m})_i\xrightarrow[\cong]{f^{\otimes m}} (B^{\otimes m})_i\ \ \ \ \ \text{for }i\le r.$$
Observe that the isomorphism is equivariant with respect to the action of ${S}_m$. Taking the ${S}_m$-invariants, we obtain the second claim.
\end{proof}

Finally,  we apply Lemma \ref{graded vs} to study the compactly supported cohomology of symmetric powers.
\begin{lemma}
\label{sym coh}
Suppose $X$ is a closed subspace of $Y$ such that the inclusion $inc:X\hookrightarrow Y$ induces an isomorphism
$$inc^*:H^i_c(Y;\QQ)\xrightarrow{\cong} H^i_c(X;\QQ)$$
for any $i\le r$. Then for any natural number $m$, the inclusion $inc: \sym^m(X)\hookrightarrow \sym^m(Y)$ also induces an isomorphism
$$inc^*:H^i_c(\sym^m(Y);\QQ)\xrightarrow{\cong} H^i_c(\sym^m(X);\QQ)$$
for any $i\le r$. 
\end{lemma}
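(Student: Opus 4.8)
The plan is to reduce the statement about symmetric powers to the algebraic statement of Lemma \ref{graded vs} by identifying the compactly supported cohomology of $\sym^m(X)$ with the $S_m$-invariants of a tensor power. First I would invoke the transfer isomorphism for finite group quotients with rational coefficients: since $\sym^m(X) = X^m / S_m$ and we work over $\QQ$, we have $H^*_c(\sym^m(X);\QQ) \cong \big(H^*_c(X^m;\QQ)\big)^{S_m}$, where $S_m$ acts by permuting factors (and with the appropriate Koszul signs coming from the grading, which is exactly why this is the ``graded'' invariants). The key point justifying this is that for a finite group $G$ acting on a reasonably nice space $Z$ (e.g.\ a manifold, or a finite CW complex, or more generally a space where the quotient map is nice enough), $H^*_c(Z/G;\QQ) \cong H^*_c(Z;\QQ)^G$; here $Z = X^m$ is an open subset of a manifold, or a locally closed subspace of projective space, so this applies.

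Next I would combine this with the Künneth formula for compactly supported cohomology, $H^*_c(X^m;\QQ) \cong H^*_c(X;\QQ)^{\otimes m}$, which holds with field coefficients and for the spaces at hand (again using that these are nice locally compact spaces, e.g.\ locally closed in $\PP^N$). Putting the two together, $H^*_c(\sym^m(X);\QQ) \cong \big(H^*_c(X;\QQ)^{\otimes m}\big)^{S_m}$, and identically $H^*_c(\sym^m(Y);\QQ) \cong \big(H^*_c(Y;\QQ)^{\otimes m}\big)^{S_m}$. Moreover all of these identifications are natural in the space, so the map $inc^*: H^*_c(\sym^m(Y);\QQ) \to H^*_c(\sym^m(X);\QQ)$ is identified with the map on $S_m$-invariants of tensor powers induced by $inc^*: H^*_c(Y;\QQ) \to H^*_c(X;\QQ)$.

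Finally I would apply Lemma \ref{graded vs} directly, taking $A = C = H^*_c(Y;\QQ)$, $B = D = H^*_c(X;\QQ)$, and $f = g = inc^*$, which by hypothesis is an isomorphism in each degree $i \le r$. The second conclusion of Lemma \ref{graded vs} then gives that $\big(H^*_c(Y;\QQ)^{\otimes m}\big)^{S_m}_i \to \big(H^*_c(X;\QQ)^{\otimes m}\big)^{S_m}_i$ is an isomorphism for all $i \le r$, which via the identifications above is exactly the assertion that $inc^*: H^i_c(\sym^m(Y);\QQ) \to H^i_c(\sym^m(X);\QQ)$ is an isomorphism for $i \le r$.

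The main obstacle is not the algebra but verifying that the topological inputs—the transfer isomorphism $H^*_c(Z/G;\QQ) \cong H^*_c(Z;\QQ)^G$ and the compactly supported Künneth formula—genuinely apply to the spaces $\sym^m(X)$ and $\sym^m(Y)$ appearing here. Since in our application $X$ and $Y$ are locally closed subspaces of complex projective space (being unions of strata $T_{\ld,n}$), they are locally compact, Hausdorff, and of finite cohomological dimension, so both statements hold; one should just be careful that $X$ being \emph{closed} in $Y$ is what makes $\sym^m(X)$ closed in $\sym^m(Y)$ (so that $inc^*$ on compactly supported cohomology is the natural restriction map), which follows because the quotient map $Y^m \to \sym^m(Y)$ is closed and $X^m$ is closed in $Y^m$. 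Beyond this bookkeeping, the proof is a formal consequence of Lemma \ref{graded vs}.
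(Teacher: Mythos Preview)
Your proof is correct and follows essentially the same approach as the paper: both identify $H^*_c(\sym^m Z;\QQ)$ with $(H^*_c(Z;\QQ)^{\otimes m})^{S_m}$ via transfer and K\"unneth, and then invoke Lemma~\ref{graded vs}. You are in fact more careful than the paper about the bookkeeping (naturality, closedness of $\sym^m(X)$ in $\sym^m(Y)$, and the hypotheses needed for transfer and K\"unneth), but the underlying argument is the same.
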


\begin{proof}
Since $X$ is a closed subspace of $Y$, the symmetric power $\sym^m(X)$ is also a closed subspace of $\sym^m(Y)$. Hence the inclusion map $\sym^m(X)\hookrightarrow \sym^m(Y)$ is proper and induces maps on cohomology groups with compact support. 

Moreover, we have
$$H^*_c(\sym^m Y;\QQ) = H^*_c(Y^m/{S}_m;\QQ)\cong H^*_c(Y^m ;\QQ)^{{S}_m}\cong (H^*_c(Y;\QQ)^{\otimes m})^{{S}_m}$$
where the second isomorphism is the transfer homomorphism. Lemma \ref{sym coh} now follows by applying Lemma \ref{graded vs} to $A=H^*_c(\sym^m Y;\QQ)$ and $B=H^*_c(\sym^m X;\QQ)$.
\end{proof}

\subsection{The proof of Theorem \ref{high stab}}
 
We proceed by induction on $d\ge 2$. First we check the base case when $d=2$. We have
$$\red_{2,n}(\CC)= \sym^2\pl_{1,n}(\CC) \cong \sym^2\CC P^{n-1}.$$
The inclusion $\CC P^{n-1}\hookrightarrow \CC P^n$ induces an isomorphism on $i$-th rational compactly supported cohomology when $i\le 2n-2$. Thus, by Lemma \ref{sym coh}, we have 
$$H^i_c(\red_{2,n}(\CC);\QQ)\cong H^i_c(\red_{2,n+1}(\CC);\QQ) $$
when $i\le2n-2$. By Lemma \ref{red ird}, we have 
$$H^i_c(\ir_{2,n}(\CC);\QQ)\cong H^i_c(\ir_{2,n+1}(\CC);\QQ) $$
when $0<i\le 2n-1.$ The isomorphism also holds when $i=0$ because $\ir_{2,n}(\CC)$ and $\ir_{2,n+1}(\CC)$ are both connected (by Theorem \ref{homo stab low}) and noncompact and thus both have $H_c^0=0$.

For induction, suppose that for a fixed $d>1$, our claim is true for any $j<d$. We want to prove the claim for $d$.  Again, since $\ird$ is connected and noncompact, it has vanishing $H^{0}_c$. Theorem \ref{high stab} is already true for $i=0$. By Lemma \ref{red ird}, it suffices to prove the following claim for our fixed $d$.

\begin{claim}
\label{red stab}
For any $n>1$ and for any $i<\frac{2n}{d-1}-\frac{(d-2)(d-3)}{2}-2$, the inclusion $\rd \hookrightarrow\rdd$ induces an isomorphism
$$H^i_c(\rd; \QQ)\xleftarrow[]{\cong} H^i_c(\rdd;\QQ).$$
\end{claim}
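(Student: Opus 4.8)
The plan is to prove Claim \ref{red stab} by a second spectral-sequence argument, using the stratification of $\rd$ by partition type introduced in Lemma \ref{ss} and comparing it with the analogous stratification of $\rdd$. Recall from (\ref{spec}) that there is a spectral sequence
\[
E_1^{p,q} = \bigoplus_{\ld\vdash d,\ |\ld|=d-p\ge2} H_c^{p+q}(\s;\Z)\ \Longrightarrow\ H^{p+q}_c(\rd;\Z),
\]
and the inclusion $\rd\hookrightarrow\rdd$ is compatible with the filtrations (\ref{fil}), because the closure condition ``$\ld_f$ is finer than or equal to $\mu$'' is intrinsic to how a polynomial factors and does not depend on the ambient number of variables. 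Hence the inclusion induces a morphism of spectral sequences $E_1^{p,q}(\rdd)\to E_1^{p,q}(\rd)$ (with $\QQ$-coefficients). By a standard comparison argument, it suffices to show that for every partition $\ld\vdash d$ with $|\ld|\ge 2$, the inclusion $\s\hookrightarrow\sn$ induces an isomorphism on $H^k_c(-;\QQ)$ for all $k$ in the relevant range.

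First I would fix such a $\ld$ and use the product decomposition (\ref{sym}): $\s\cong\prod_{j=1}^d\sym^{m_j(\ld)}(\ir_{j,n}(\CC))$, and likewise for $\sn$. Since $|\ld|\ge2$, every part $j$ appearing in $\ld$ satisfies $1\le j\le d-1<d$, so the inductive hypothesis of Theorem \ref{high stab} applies to each factor $\ir_{j,n}(\CC)\hookrightarrow\ir_{j,n+1}(\CC)$ (when $j=1$ this is just $\CC P^{n-1}\hookrightarrow\CC P^n$, which is an isomorphism on $H^i_c$ for $i\le 2n-2$). By the inductive hypothesis, $\ir_{j,n}(\CC)\hookrightarrow\ir_{j,n+1}(\CC)$ induces an isomorphism on $H^i_c(-;\QQ)$ for $i<\frac{2n}{j-1}-\frac{(j-2)(j-3)}{2}-1$; since $j\le d-1$, this range contains the range $i<\frac{2n}{d-1}-\frac{(d-2)(d-3)}{2}-2$ for which we need the conclusion (one checks $\frac{2n}{j-1}-\frac{(j-2)(j-3)}{2}-1\ge\frac{2n}{d-1}-\frac{(d-2)(d-3)}{2}-2$ for $2\le j\le d-1$, the first term being monotone decreasing in $j$ and the second term being handled by the crude comparison of the quadratics, exactly as in the proof of Lemma \ref{red ird}). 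Then Lemma \ref{sym coh} promotes each such isomorphism to an isomorphism $H^i_c(\sym^{m_j}(\ir_{j,n+1}(\CC));\QQ)\xrightarrow{\cong}H^i_c(\sym^{m_j}(\ir_{j,n}(\CC));\QQ)$ in the same range, and finally Lemma \ref{graded vs} (the K\"unneth statement), applied to the finitely many factors, gives that $H^i_c(\sn;\QQ)\to H^i_c(\s;\QQ)$ is an isomorphism for $i$ below the stated bound.

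With the $E_1$-level isomorphism in hand, the morphism of spectral sequences induces isomorphisms on every subsequent page in the range $p+q< \frac{2n}{d-1}-\frac{(d-2)(d-3)}{2}-2$ — here one must be slightly careful that differentials cannot carry a class out of (or into) the isomorphism range, but since the range is a half-plane $p+q<c$ and all differentials $d_r$ decrease $p+q$ by $1$ — wait, in this cohomological spectral sequence $d_r\colon E_r^{p,q}\to E_r^{p+r,q-r+1}$ preserves $p+q\mapsto p+q+1$, so a potential issue is classes in total degree exactly $c-1$ being hit from degree $c-2$; this is fine, but classes in degree $c$ mapping in from degree $c-1$ could a priori interfere. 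The clean way around this, which I would adopt, is the usual one: the comparison map is an isomorphism on $E_1$ in total degrees $\le c$ (not just $<c$), which by finite convergence (the $E_{d-1}=E_\infty$ statement in Lemma \ref{ss}) gives an isomorphism on $E_\infty$ and hence on the associated graded of $H^{p+q}_c$ in total degrees $<c$, and therefore on $H^i_c(\rd;\QQ)$ itself for $i<c$. Concretely one just proves the $\s$-level isomorphism in the slightly larger range $i\le\frac{2n}{d-1}-\frac{(d-2)(d-3)}{2}-2$ and reads off Claim \ref{red stab} for $i<\frac{2n}{d-1}-\frac{(d-2)(d-3)}{2}-2$.

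The main obstacle is purely bookkeeping: verifying that the ranges line up, i.e. that for every part $j$ of every partition $\ld\vdash d$ with $|\ld|\ge2$ the inductive range $i<\frac{2n}{j-1}-\frac{(j-2)(j-3)}{2}-1$ is at least as large as the target range $i<\frac{2n}{d-1}-\frac{(d-2)(d-3)}{2}-2$. Since $j\le d-1$ forces $\frac{2n}{j-1}\ge\frac{2n}{d-1}$ (strictly, when $j<d$), and since $\frac{(j-2)(j-3)}{2}$ versus $\frac{(d-2)(d-3)}{2}$ can be compared crudely (the gain in the first term dominates the loss in the second, because $n>1$ and the first term grows linearly in $n$ while the quadratic terms are independent of $n$), this goes through; the $j=1$ factor is handled separately as above since $\frac{2n}{j-1}$ is not defined there. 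Everything else is an assembly of Lemmas \ref{red ird}, \ref{graded vs}, \ref{sym coh}, and the spectral sequence of Lemma \ref{ss}, together with the induction hypothesis.
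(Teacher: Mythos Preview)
Your overall strategy is exactly the paper's: compare the two spectral sequences of Lemma~\ref{ss} via the stratum decomposition (\ref{sym}), feed in the induction hypothesis factor by factor, and use Lemmas~\ref{graded vs} and \ref{sym coh}. The gap is in the spectral-sequence bookkeeping, and it is a real one.

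You assert that an isomorphism on $E_1$ in total degrees $\le c$ (with $c=\frac{2n}{d-1}-\frac{(d-2)(d-3)}{2}-2$) yields an isomorphism on $E_\infty$ in total degrees $<c$. This is false when $d>3$. In this cohomological spectral sequence each $d_r$ raises total degree by $1$, so to know $E_{r+1}$ at total degree $k$ you need to control $E_r$ at degree $k+1$ (the target of the outgoing differential). Hence an $E_1$-isomorphism in degrees $\le c$ only propagates to an $E_2$-isomorphism in degrees $\le c-1$, and inductively to an $E_{d-1}=E_\infty$-isomorphism in degrees $\le c-(d-2)$. You lose one degree \emph{per page}, not one degree in total; your ``slightly larger range $i\le c$'' buys exactly one page, not all $d-2$ of them.

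The fix is precisely what the paper does: prove the stratum-level isomorphism (your Claim~\ref{ld stab} analogue) in the \emph{larger} range $i<r:=\frac{2n}{d-1}-\frac{(d-2)(d-3)}{2}+d-4=c+(d-2)$, and then let the $d-2$ pages eat the surplus. This in turn requires checking that $r\le \frac{2n}{j-1}-\frac{(j-2)(j-3)}{2}-1$ for every $j\le d-1$. Your heuristic ``the first term grows linearly in $n$ while the quadratic terms are independent of $n$'' is not an argument valid for all $n>1$; the paper instead observes that the right-hand side is monotone in $j$, evaluates at $j=d-1$, and computes the difference to be exactly $\frac{2n}{d-2}-\frac{2n}{d-1}>0$, which holds for every $n$.
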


\begin{proof}

We will prove Claim \ref{red stab} in two steps: first, we show that it will follow from Claim \ref{ld stab} below, and second, we show that Claim \ref{ld stab} follows from the induction hypothesis.

\noindent\textbf{Step 1.} We first reduce Claim \ref{red stab} to the following claim:
\begin{claim}
\label{ld stab}
For any $n>1$, for any non-singleton partition $\ld\vdash d$ , and for any $i<\frac{2n}{d-1}-\frac{(d-2)(d-3)}{2}+d-4$, the inclusion $\s \hookrightarrow T_{\ld,n+1}$ induces an isomorphism
$$H^i_c(\s; \QQ)\xleftarrow[]{\cong} H^i_c(T_{\ld,n+1};\QQ).$$
\end{claim}
\begin{proof}[Proof of that Claim \ref{ld stab} $\Rightarrow$ Claim \ref{red stab}]

Consider the spectral sequence in Lemma \ref{ss} tensored with $\QQ$:
\begin{equation}
\label{ss1}
    E_1^{p,q} = \bigoplus_{\ld\vdash d, |\ld|=d-p\ge2} H_c^{p+q}(\s;\QQ)\ \Longrightarrow\ H^{p+q}_c(\rd;\QQ)
\end{equation}
Consider the same spectral sequence for $\red_{d,n+1}$:
\begin{equation}
\label{ss2}
    F_1^{p,q} = \bigoplus_{\ld\vdash d, |\ld|=d-p\ge2} H_c^{p+q}(\sn;\QQ)\ \Longrightarrow\ H^{p+q}_c(\rdn;\QQ)
\end{equation}
Since the inclusion $\rd\hookrightarrow\rdn$ preserves the filtration (\ref{fil}), it induces a map between the two spectral sequences (\ref{ss1}) and (\ref{ss2}). 

Claim \ref{ld stab} implies that the inclusion $\rd\hookrightarrow\rdn$ induces an isomorphism between the 1st pages of the spectral sequences (\ref{ss1}) and (\ref{ss2})
$$E_1^{p,q}\xleftarrow[]{\cong}F_1^{p,q},\ \ \text{ when } p+q<\frac{2n}{d-1}-\frac{(d-2)(d-3)}{2}+d-4.$$
Taking the next page, we have
$$E_2^{p,q}\xleftarrow[]{\cong}F_2^{p,q},\ \ \text{ when } p+q<\frac{2n}{d-1}-\frac{(d-2)(d-3)}{2}+d-5.$$
In general, we have
$$E_r^{p,q}\xleftarrow[]{\cong}F_r^{p,q},\ \ \text{ when } p+q<\frac{2n}{d-1}-\frac{(d-2)(d-3)}{2}+d-3-r.$$
By Lemma \ref{ss}, the spectral sequences $E$ and $F$ both converge at page $d-1$. Thus we have
$$H^i_c(\rd; \QQ)\xleftarrow[]{\cong} H^i_c(\rdd;\QQ)$$
when $i<\frac{2n}{d-1}-\frac{(d-2)(d-3)}{2}-2.$ 
\end{proof}

\noindent\textbf{Step 2.} Finally, we will prove Claim \ref{ld stab} assuming our induction hypothesis: 
for any $j<d$, for any $n>1$, the natural inclusion $\ir_{j,n}(\CC) \hookrightarrow\ir_{j,n+1}(\CC)$ induces an isomorphism
$$H^i_c(\ir_{j,n}(\CC); \QQ)\xleftarrow[]{\cong} H^i_c(\ir_{j,n+1}(\CC);\QQ)$$
when $i<\frac{2n}{j-1}-\frac{(j-2)(j-3)}{2}-1$.


To prove Claim \ref{ld stab}, we first notice that since $\ld\vdash d$ is a non-singleton partition, each part of $\ld$ must have length at most $d-1$. Compare the following two isomorphisms of graded vector spaces given by (\ref{sym}):
\begin{align}
     H^*_c(\s;\QQ)&\cong\bigotimes_{j= 1}^{d-1}H^*_c(\mathrm{Sym}^{m_j(\ld)}\ir_{j,n}(\CC);\QQ)\label{1}\\
     H^*_c(T_{\ld,n+1};\QQ)&\cong\bigotimes_{j= 1}^{d-1}H^*_c(\mathrm{Sym}^{m_j(\ld)}\ir_{j,n+1}(\CC);\QQ)\label{2}
\end{align}
Let $r:=\frac{2n}{d-1}-\frac{(d-2)(d-3)}{2}+d-4$, which is the upper bound for $i$ in Claim \ref{ld stab}. We claim that 
\begin{equation}
    \label{e}
    r\le \frac{2n}{j-1}-\frac{(j-2)(j-3)}{2}-1\ \text{ for any } j<d.
\end{equation}
Notice that the right hand side is non-increasing in $j$ taken integer values. So it suffices to check the inequality (\ref{e}) for $j=d-1$. We calculate that
$$    \bigg(\text{RHS of (\ref{e}) evaluated at } j=d-1\bigg)-r = \frac{2n}{d-2}-\frac{2n}{d-1}>0
$$
confirming the inequality (\ref{e}). Hence, for any $i<r$ as in the assumption of Claim \ref{ld stab}, we must also have $i<\frac{2n}{j-1}-\frac{(j-2)(j-3)}{2}-1$ and hence by the induction hypothesis we have
$$H^i_c(\ir_{j,n}(\CC); \QQ)\xleftarrow[]{\cong} H^i_c(\ir_{j,n+1}(\CC);\QQ).$$
Thus, if we compare (\ref{1}) and (\ref{2}) using  Lemma \ref{graded vs} and Lemma \ref{sym coh}, we have that for any $i<r$
$$H^*_c(\s;\QQ)\xleftarrow[]{\cong} H^*_c(T_{\ld,n+1};\QQ)$$
which gives Claim \ref{ld stab}. As a consequence, Claim \ref{red stab} follows. By induction, we obtain Theorem \ref{high stab}.
\end{proof}

\section{A vanishing theorem}




\begin{thm}
\label{vanishing2}
For $d,n>1$, when $k\le 2d$, we have
$$H^k_c(\ird;\QQ)=0.$$
\end{thm}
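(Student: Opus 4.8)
The plan is to run the same stratification machinery used for Theorem \ref{high stab}, but now extract a vanishing statement for low-degree compactly supported cohomology rather than a stability statement. By Lemma \ref{red ird} (whose range of applicability I would first verify covers $k \le 2d$ for the relevant $d, n$), it suffices to show $H^k_c(\rd;\QQ) = 0$ for $k \le 2d - 1$, since $H^k_c(\ird;\QQ) \cong H^{k-1}_c(\rd;\QQ)$ in the appropriate range and $H^0_c(\ird) = 0$ by connectedness and noncompactness. So the heart of the matter is a vanishing result for the reducible stratum.

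First I would set up an induction on $d$, with base case $d = 2$: here $\rd \cong \sym^2 \PP^{n-1}$, which is homotopy equivalent to a space whose lowest nonvanishing compactly supported cohomology sits well above degree $2d - 1 = 3$ when $n > 1$ (indeed $\sym^2\PP^{n-1}$ is a $(2n-2)$-complex-dimensional orbifold, and $H^k_c$ of its smooth locus vanishes for $k$ small; one checks $H^k_c(\sym^2\PP^{n-1};\QQ) = 0$ for $k < 2(n-1) \ge 2$... — more carefully, since $\PP^{n-1}$ is compact, $H^*_c(\sym^2\PP^{n-1}) = H^*(\sym^2\PP^{n-1})$ which vanishes in odd degrees and is one-dimensional in low even degrees; I would instead invoke that what we need is vanishing of $H^k_c$ and handle $d=2$ by direct computation or by noting $\red_{2,n}$ is compact so its $H^k_c = H^k$ is nonzero in even degrees — this shows the theorem's bound $k \le 2d$ must be read with the $\ird$ hypothesis and that Lemma \ref{red ird} is doing real work). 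For the inductive step, I would use the spectral sequence of Lemma \ref{ss}, $E_1^{p,q} = \bigoplus_{\ld \vdash d,\ |\ld| = d-p \ge 2} H^{p+q}_c(\s;\QQ) \Rightarrow H^{p+q}_c(\rd;\QQ)$, and by (\ref{sym}) reduce each $H^*_c(\s;\QQ)$ to a tensor product of $H^*_c(\sym^{m_j} \ir_{j,n}(\CC);\QQ)$ over parts $j < d$. The key input is: for each $j$ with $1 \le j < d$, the space $\ir_{j,n}(\CC)$ has $H^k_c(\ir_{j,n}(\CC);\QQ) = 0$ for $k \le 2j$ by the induction hypothesis (with $\ir_{1,n}(\CC) = \PP^{n-1}$ handled separately, using that its compactly supported cohomology vanishes below degree $2(n-1)$, which dominates $2d$ once $n$ is large — and the theorem is vacuous or easy for small $n$ anyway, so I would pin down the exact constraint on $n$ here; this is likely where the statement as printed, with only "$d, n > 1$", needs the hypothesis $n$ large relative to $d$, or the bound $2d$ is genuinely what survives).

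Granting the per-part vanishing, I would argue as follows. In the tensor product $\bigotimes_j H^*_c(\sym^{m_j}\ir_{j,n}(\CC);\QQ)$, each factor $H^*_c(\sym^{m_j}\ir_{j,n}) \cong (H^*_c(\ir_{j,n})^{\otimes m_j})^{S_{m_j}}$ (transfer), and since $H^*_c(\ir_{j,n})$ vanishes below degree $2j+1$ (for $j \ge 2$; and below degree $\ge 2j+1$ also for $j=1$ in the relevant range), the $m_j$-fold tensor power vanishes below degree $m_j(2j+1)$. Therefore the total tensor product, hence $H^k_c(\s;\QQ)$, vanishes for $k < \sum_j m_j(2j+1) = \sum_j m_j \cdot 2j + \sum_j m_j = 2d + |\ld|$. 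Since $|\ld| \ge 2$ on every stratum appearing, we get $H^k_c(\s;\QQ) = 0$ for $k \le 2d + 1$, a fortiori for $k \le 2d - 1$. Feeding this into the spectral sequence kills $E_1^{p,q}$ for $p + q \le 2d - 1$ (one must also check that the partition $(1^d)$ or other extreme strata do not violate this — they don't, since $|\ld|$ only grows), so $H^k_c(\rd;\QQ) = 0$ for $k \le 2d - 1$, and the theorem follows from Lemma \ref{red ird}.

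The main obstacle I anticipate is the part $j = 1$: the stratum factors involving linear polynomials contribute $H^*_c(\sym^{m_1}\PP^{n-1};\QQ)$, whose low-degree compactly supported cohomology does \emph{not} vanish (it equals ordinary cohomology of a projective-type space, nonzero in degree $0$). So the naive "each factor vanishes below degree $2j+1$" fails for $j=1$. The fix is that in a non-singleton partition $\ld \vdash d$ with a part equal to $1$, there must be \emph{other} parts, and the bookkeeping $\sum_j m_j(2j+1)$ still gives a bound $\ge 2d + |\ld| \ge 2d+2$ only if one is careful that the $j=1$ factors contribute their genuine lowest degree, which is $0$, not $m_1 \cdot 3$. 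Redoing the count honestly: $H^k_c(\s) = 0$ for $k < \sum_{j \ge 2} m_j(2j+1) = 2(d - m_1) + (|\ld| - m_1)$, which for $\ld = (1^{d-1}, \text{one part } (d-1))$... wait, that is a valid non-singleton partition with $m_1 = d-1$, giving the bound $2(d-(d-1)) + (d - (d-1)) \cdot$... $= 2 \cdot 1 + 1 = 3$ — far too weak. So the real obstacle is strata with many linear factors. Resolving this requires either (a) a separate, sharper analysis of $H^*_c(\sym^m \PP^{n-1})$ combined with the Künneth grading, tracking that the \emph{top} of the linear-part cohomology is small, or (b) changing the filtration/argument to peel off linear factors first — e.g., stratifying by the number of non-linear irreducible factors and using that the "linear part" contributes a compact factor whose cohomology is concentrated in a controlled range, so that the product's low cohomology is still governed by the non-linear factors whose count is at most $d/2$. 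I expect step (b), or an inductive argument that handles the presence of linear factors by relating $\rd$ to lower strata of $\pl_{d',n}$, to be the crux, and it is the one place where the proof genuinely departs from a mechanical reuse of Section 6's argument.
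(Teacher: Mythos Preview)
Your overall strategy---induction on $d$, per-stratum vanishing via K\"unneth and transfer, the spectral sequence of Lemma~\ref{ss}, and then Lemma~\ref{red ird}---is exactly the paper's proof. The obstacle you flag at the end, however, is a phantom created by a misidentification of $\ir_{1,n}(\CC)$.

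You write $\ir_{1,n}(\CC)=\PP^{n-1}$ (and likewise $\red_{2,n}(\CC)\cong\sym^2\PP^{n-1}$), but by Lemma~\ref{projective} one has
\[
\ir_{1,n}(\CC)=\pl_{1,n}(\CC)\cong \CC\times\PP^{n-1}.
\]
The extra $\CC$ factor, coming from the constant term of a linear polynomial, is noncompact and shifts compactly supported cohomology up by~$2$: $H^k_c(\ir_{1,n}(\CC);\QQ)\cong H^{k-2}(\PP^{n-1};\QQ)$, which vanishes for $k<2$. So the correct lowest nonvanishing degree for $j=1$ is $2$, not $0$. Feeding this into your own bookkeeping gives, for each non-singleton $\ld\vdash d$,
\[
H^k_c(\s;\QQ)=0\quad\text{for }k< m_1(\ld)\cdot 2+\sum_{j\ge 2}m_j(\ld)(2j+1)=2d+|\ld|-m_1(\ld),
\]
and since $|\ld|-m_1(\ld)\ge 0$ (with equality only at $\ld=(1^d)$), every stratum satisfies $H^k_c(\s;\QQ)=0$ for $k<2d$. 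The spectral sequence then yields $H^k_c(\rd;\QQ)=0$ for $k\le 2d-1$, and Lemma~\ref{red ird} finishes. No separate treatment of linear-rich strata, and neither of your proposed fixes (a) or (b), is needed. The paper packages the same computation into an auxiliary function $r$ with $r(1)=2$ and $r(j)=2j+1$ for $j\ge 2$, so that $r(\ld)=\sum_j m_j(\ld)r(j)=2d+|\ld|-m_1(\ld)$; the same slip (dropping the $\CC$ factor) also explains your confusion in the base case $d=2$.
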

Taking the limit $n\to\infty$, we obtain the following corollary.
\begin{cor}\label{van limit}
For $d>1$, when $k\le 2d$, we have 
$$b_k(d):=\lim_{n\to\infty}\dim H^k_c(\ird;\QQ)=0.$$
\end{cor}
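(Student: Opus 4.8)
The plan is to run an induction on $d$ that mirrors the structure of the proof of Theorem \ref{high stab}, but now tracking an absolute vanishing range rather than a stability range. The base case $d=2$ is handled directly: $\red_{2,n}(\CC)\cong\sym^2\PP^{n-1}$, and $H^k_c(\sym^2\PP^{n-1};\QQ)\cong(H^*_c(\PP^{n-1};\QQ)^{\otimes2})^{S_2}$ vanishes for $k$ below twice the bottom nonzero degree of $H^*_c(\PP^{n-1};\QQ)$, which is $2(n-1)$; since $2(n-1)\ge 2d=4$ once $n\ge 3$ (and the small $n$ cases can be checked separately), combined with Lemma \ref{red ird} and the vanishing of $H^k_c(\pld;\QQ)$ for $k<2\binom{n+d-1}{n}$ one gets $H^k_c(\ir_{2,n}(\CC);\QQ)=0$ for $k\le 2d$. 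For the inductive step I would fix $d>2$, assume the theorem for all smaller degrees, and aim to show $H^k_c(\rd;\QQ)=0$ for $k\le 2d-1$; then Lemma \ref{red ird} (whose hypothesis range comfortably contains $k\le 2d$, as one checks against $2\binom{n+d-1}{n}$) upgrades this to $H^k_c(\ird;\QQ)=0$ for $k\le 2d$, noting $H^0_c$ vanishes by connectedness and noncompactness.

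To kill $H^k_c(\rd;\QQ)$ I would feed the spectral sequence of Lemma \ref{ss} (tensored with $\QQ$): it suffices that every stratum $\s$ with $\ld\vdash d$, $|\ld|\ge2$, has $H^{k}_c(\s;\QQ)=0$ for $k\le 2d-1$. By the Künneth/transfer decomposition \eqref{sym}, $H^*_c(\s;\QQ)\cong\bigotimes_{j}H^*_c(\sym^{m_j(\ld)}\ir_{j,n}(\CC);\QQ)$, and the bottom nonzero degree of a tensor product is the sum of the bottom nonzero degrees of the factors, while $H^*_c(\sym^m X;\QQ)=(H^*_c(X;\QQ)^{\otimes m})^{S_m}$ has bottom degree $m$ times that of $X$. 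So the bottom nonzero degree of $H^*_c(\s;\QQ)$ is $\sum_j m_j(\ld)\cdot(\text{bottom degree of }H^*_c(\ir_{j,n}(\CC);\QQ))$. The induction hypothesis gives that each $\ir_{j,n}(\CC)$ with $j<d$ has $H^k_c=0$ for $k\le 2j$, i.e. its bottom nonzero compactly supported cohomological degree is at least $2j+1$; for $j=1$, $\ir_{1,n}(\CC)=\PP^{n-1}$ has bottom degree $2(n-1)\ge 2\cdot1+1$ when $n\ge2$. Hence the bottom nonzero degree of $H^*_c(\s;\QQ)$ is at least $\sum_j m_j(\ld)(2j+1)=2d+|\ld|\ge 2d+2>2d-1$, giving the required vanishing on the $E_1$-page and therefore $H^k_c(\rd;\QQ)=0$ for $k\le 2d-1$.

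The one genuinely delicate point is bookkeeping at the boundary of the ranges: I must make sure the induction hypothesis as I phrase it ("$H^k_c(\ir_{j,n}(\CC);\QQ)=0$ for $k\le 2j$", equivalently bottom degree $\ge 2j+1$) is exactly what both the base case delivers and what the stratum estimate consumes, and that the $j=1$ stratum — which is never governed by the inductive hypothesis proper — contributes at least $2j+1=3$ per part, which forces the $n>1$ hypothesis to be used honestly. The potential obstacle, and the thing to watch, is whether Lemma \ref{red ird} applies in the full range $k\le 2d$: this reduces to the inequality $2d\le 2\binom{n+d-1}{n}-1$, which holds for $n,d>1$ by the same crude estimate used in the proof of Lemma \ref{red ird}, so there is no real gap — but it is the step where the argument could silently go wrong if the constants were off. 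Once Theorem \ref{vanishing2} is in hand, Corollary \ref{van limit} is immediate: the dimensions $\dim H^k_c(\ird;\QQ)$ are eventually constant in $n$ by Theorem \ref{high stab} (for $k$ in the stable range, which includes $k\le 2d$ once $n$ is large) and each term in the stabilizing sequence is $0$ by Theorem \ref{vanishing2}, so the limit $b_k(d)$ is $0$ for $k\le 2d$.
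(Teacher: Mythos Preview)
Your overall architecture is the same as the paper's: induct on $d$, use the spectral sequence of Lemma \ref{ss}, and bound the bottom nonzero degree of $H^*_c(\s;\QQ)$ via K\"unneth and the transfer isomorphism $H^*_c(\sym^m X;\QQ)\cong (H^*_c(X;\QQ)^{\otimes m})^{S_m}$. The deduction of Corollary \ref{van limit} from Theorem \ref{vanishing2} via Theorem \ref{high stab} is also fine.

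There is, however, a genuine error in how you treat the $j=1$ pieces. You write ``for $j=1$, $\ir_{1,n}(\CC)=\PP^{n-1}$ has bottom degree $2(n-1)$.'' Both halves of this are wrong. First, by Lemma \ref{projective} we have $\ir_{1,n}(\CC)=\pl_{1,n}(\CC)\cong \CC\times\PP^{n-1}$, not $\PP^{n-1}$. Second, even for $\PP^{n-1}$ the space is compact, so $H^*_c=H^*$ and the bottom nonzero degree is $0$, not $2(n-1)$. For the actual space $\CC\times\PP^{n-1}$ one has $H^*_c(\CC)\otimes H^*_c(\PP^{n-1})$, whose bottom nonzero degree is $2+0=2$. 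The same mistake infects your base case, where $\red_{2,n}(\CC)=\sym^2(\CC\times\PP^{n-1})$, not $\sym^2\PP^{n-1}$, and its $H^*_c$ starts in degree $4$, not $4(n-1)$.

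Because of this, your stratum bound $\sum_j m_j(\ld)(2j+1)=2d+|\ld|\ge 2d+2$ is not justified: with the correct input $r(1)=2$ for $j=1$ and $r(j)\ge 2j+1$ for $2\le j\le d-1$ (the honest inductive hypothesis), one obtains instead
\[
\text{bottom degree of }H^*_c(\s;\QQ)\ \ge\ 2m_1(\ld)+\sum_{j\ge 2} m_j(\ld)(2j+1)=2d+|\ld|-m_1(\ld),
\]
which is minimized at the all-ones partition and equals $2d$ there. This weaker bound is \emph{exactly} enough for what you need: it gives $H^k_c(\s;\QQ)=0$ for $k\le 2d-1$, hence $H^k_c(\rd;\QQ)=0$ for $k\le 2d-1$, and then Lemma \ref{red ird} yields $H^k_c(\ird;\QQ)=0$ for $k\le 2d$. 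This is precisely how the paper argues (it packages the bookkeeping by defining $r(1)=2$, $r(d)=2d+1$ for $d\ge 2$, and proving $r(\ld)=2d+|\ld|-m_1(\ld)$ in Proposition \ref{r2}). So your strategy survives, but only after you correct the $j=1$ input; as written, the claimed bottom-degree estimates are false.
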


\begin{proof}[Proof of Theorem \ref{vanishing2}]
We will prove Theorem \ref{vanishing2} in three steps.

\noindent\textbf{Step 1.} We  first collect some general results about graded vector spaces.

\begin{lemma}\label{graded 0}
Suppose $A$  and $B$ are graded vector spaces. If $A_i=0$ for any $i<a$, and $B_j=0$ for any $j<b$, then
\begin{enumerate}[label=(\roman*)]
    \item $(A\otimes B)_k=0$ for any $k<a+b$,
    \item $(A^{\otimes m})^{{S}_m}_k=0$ for any $k<ma$.
\end{enumerate}
\end{lemma}
\begin{proof}
Suppose (i) is false: there exists some $k< a+b$ such that  $(A\otimes B)_k=\bigoplus_{i+j=k} A_i\otimes B_j\ne0$.  There exist some $i,j$ such that $i+j=k$ and $A_i\ne0$ and $B_j\ne 0$, which implies that $i\ge a$ and $j\ge b$, and thus $i+j\ge a+b$, contradicting the assumption that $i+j=k<a+b$.

Applying (i) inductively on $m$, we obtain that $(A^{\otimes m})_k=0$ for any $k< ma$. Thus the ${S}_m$-invariant subspace must also be zero.\end{proof}

\noindent\textbf{Step 2.} We will inductively define a function $r:\NN\to\NN$ and compute its value. 

\begin{defn}
For each positive integer $d$, define $r(d)$ inductively by
\begin{align}
    &r(1)=2\nonumber\\
    \forall d>1,\ \ \ \  &r(d)=1+\min\Big\{ r(\ld): \ld\vdash d, |\ld|\ge 2\Big\},\label{r}\\
    &\text{ \ \ \ \ where for each $\ld\vdash d$ such that $|\ld|\ge 2$, we define } r(\ld):=\sum_{j=1}^{d-1} m_j(\ld) r(j).\nonumber
\end{align}
\end{defn}
\begin{prop}\label{r2}
Suppose that  $d>1$.
\begin{enumerate}[label=(\alph*)]
    \item For any $\ld\vdash d$ such that $|\ld|\ge 2$, we have $r(\ld)=2d+|\ld|-m_1(\ld)$.
    \item The minimum $r(\ld)$ is uniquely achieved at $\ld=\hat{1}$  where $\hat{1}$ stands for the partition $d=1+\cdots+1$.
    \item $r(d)=2d+1$.
\end{enumerate}
\end{prop}
\begin{proof}
 We will prove the three statements by induction on  $d\ge2$. For the base case when $d=2$,  the three statements are easily verified since $1+1$ is the only non-singleton partition of $2$. 
 
 For induction, we consider the case when  $d>2$, assuming the three statements all hold for any $e<d$. For any non-singleton partition $\ld\vdash d$ with $|\ld|\ge 2$, we have
\begin{align*}
    r(\ld)&:= \sum_{j=1}^{d-1} m_j(\ld) r(j)\\
    &= m_1(\ld)\cdot 2+\sum_{j=2}^{d-1} m_j(\ld) (2j+1)&\text{by induction hypothesis (3)}\\
    &=  \underbrace{\sum_{j=1}^{d-1} m_j(\ld)\cdot 2j}_{=2d}\ +\ \underbrace{\sum_{j=2}^{d-1} m_j(\ld)}_{=|\ld|-m_1(\ld)} \\
    &= 2d+|\ld|-m_1(\ld)
\end{align*}
Thus, (a) is verified. (b) and (c) follow immediately from (a).
\end{proof}

\noindent\textbf{Step 3.} We will prove the following vanishing result in a range defined by the function $r$. 
\begin{prop}\label{van r}
 For any $d\ge1$ and $n\ge 2$,
\begin{enumerate}
\item for any  partition $\ld\vdash d$ such that $|\ld|\ge2$, we have
    $$H^k_c(\s;\QQ)=0\ \ \ \ \ \ \ \text{    when $k<r(\ld)$
}$$ 
\item     $$H^k_c(\ird;\QQ)=0\ \ \ \ \ \ \ \text{    when $k<r(d)$.
}$$
\end{enumerate}
\end{prop}
\begin{proof}
We will prove both statements by induction on $d$. For the base case when $d=1$, part (1)  is vacuously true. We have 
$$\ir_{1,n}(\CC)= \pl_{1,n}(\CC) \cong \CC\times \CC P^{n-1}$$
where the second homeomorphism comes from  Lemma \ref{projective}. Part (2) is also verified.

We consider the case when $d\ge 2$ for the induction, assuming both (1) and (2) hold for any $e<d$. Since $\ld$ has at least two parts, each part has size at most $d-1$. Recall that (\ref{sym}) gives:
$$    \s\cong\prod_{j=1}^{d-1}\mathrm{Sym}^{m_j(\ld)}\Big(\ir_{j,n}(\CC)\Big).$$

By induction hypothesis part (2), for each $j\le d-1$, we have that
$H^k_c(\ir_{j,n}(\CC);\QQ)=0$ when $k<r(j)$. Now we briefly digress to prove the following general results about symmetric powers.
\begin{lemma}\label{sym 0}
If $H^k_c(Y;\QQ)=0$ for any $k<r$, then $H^k_c(\sym^m Y;\QQ)=0$ for any $k<mr$.
\end{lemma}
\begin{proof}
Observe that 
$$H^*_c(\sym^m Y;\QQ) =H^*_c(Y^{\times m}/{S}_m;\QQ) \cong H^*_c(Y^{\times m};\QQ)^{{S}_m} \cong (H^*_c(Y;\QQ)^{\otimes m})^{{S}_m}.$$
Apply part (ii) of Lemma \ref{graded 0}.
\end{proof}
Thus,  by Lemma  \ref{sym 0}, we have that $H^k_c\Big(\sym^{m_j(\ld)}\ir_{j,n}(\CC);\QQ\Big)=0$ when $k<m_j(\ld)r(j)$. By Lemma \ref{graded 0} part (i), we have 
\begin{equation}\label{l vanishing}
    H^k_c(\s;\QQ)=0
\end{equation}
when $k<\sum_{j=1}^{d-1}m_j(\ld)r(j)=r(\ld)$. Part (1) is verified.

To prove part (2), we consider the spectral sequence (\ref{spec})
$$  E_1^{p,q} = \bigoplus_{\ld:\ \ld\vdash d, |\ld|=d-p\ge2} H_c^{p+q}(\s;\QQ)\ \Longrightarrow\ H^{p+q}_c(\rd;\QQ)$$
By (\ref{l vanishing}), we know that $E_1^{p,q}=0$ in the range when $p+q<r(d)-1$. Thus, when $k<r(d)-1$,
$$H^{k}_c(\rd;\QQ)=0.$$
Since $n,d\ge 2$, by Proposition \ref{r2}, we have $r(d)-1=2d< 2{n+d-1\choose n}-1$. Thus, by Lemma \ref{red ird}, when $i<r(d)-1$, we have
$$H^{i+1}_c(\ird;\QQ)\cong H^{i}_c(\rd;\QQ)=0.$$
Part (2) is verified.
\end{proof}
Finally, combining  part (2) of Proposition \ref{van r} and part (c) of Proposition \ref{r2}, we obtain Theorem \ref{vanishing2}.
\end{proof}

\section{Appendix: Computation for $d\le4$}
In this appendix, we consider the stable cohomology in Theorem \ref{high stab}, more precisely, the limit
\begin{equation}
    \label{b_i}
    b_i(d):=\lim_{n\to\infty} \dim H^i_c(\ird;\QQ)
\end{equation}
for $d\le 4$. Theorem \ref{high stab} tells us that the limit exists. The purpose of our computations here is to illustrate that the stable cohomology in Theorem \ref{high stab} are generally nonzero despite the vanishing result in Theorem \ref{vanishing2}, and  that the spectral sequence (\ref{spec}) which is central in the previous proofs has nontrivial differentials, even in the stable range. To keep this appendix brief, we will only sketch the computations, highlighting the analysis of differentials in the spectral sequence.

As in Theorem \ref{high stab}, there is a closed embedding (hence a  proper map) $\pld\to\pl_{d,n+1}(\CC)$ for each $n$. We define the following direct limits
\begin{align*}
    &\pl_d(\CC):=\lim_{\longrightarrow} \pld\\   &\ir_d(\CC):=\lim_{\longrightarrow} \ird\\
    &\red_d(\CC):=\lim_{\longrightarrow} \rd\\
    &\mathrm{T}_{\ld}:=\lim_{\longrightarrow} \s \ \ \  \text{ for each $\ld\vdash d$}.
\end{align*}
Since compactly supported cohomology preserves limits, the stable cohomology can be expressed as:
$$b_i(d)=\dim H^i_c(\ir_d(\CC);\QQ).$$
All cohomology considered in this section will be over $\QQ$. We will therefore suppress  the $\QQ$-coefficients from our notation. We will encode our computation of $H^i_c(\ii)$ into a Poincar\'e series:
$$P_d(t):=\sum_{i} b_i(d) t^i.$$

\noindent\textbf{d=1.} We have $\ir_{1,n}(\CC)=\pl_{1,n}(\CC) = \CC\times \CC P^{n-1}$ by Lemma \ref{projective}. Thus, as $n\to\infty$, we have 
$$P_1(t)= \frac{t^2}{1-t^2}.$$

\noindent\textbf{d=2.} 
By Lemma \ref{red ird}, when $d>1$ and $n\to\infty$, we have that for every $i$,
\begin{equation}\label{r vs i}
    H^i_c(\red_d(\CC))\cong H^{i+1}_c(\ii).
\end{equation}
For $V$ a graded vector space, we use $s^k V$ to denote $V$ with grading shifted by $k$ (\emph{a.k.a} the $k$-th suspension of $V$) where $(s^k V)_i = V_{i-k}$. When $d=2$, we have 
\begin{align*}
    H^*_c(\red_2(\CC))&=H^*_c\Big(\sym^2 \big(\ir_1(\CC)\big)\Big)\\
    &\cong\sym^2 \big( H^*_c(\ir_1(\CC))\big)\\
    &\cong s^4\sym^2 H^*(\CC P^\infty)\\
    &\cong s^4 \QQ[e_1,e_2]\ \ \ \ \ \ \ \text{ where } |e_1|=2, |e_2|=4
\end{align*}
The last isomorphism comes from the fundamental theorem of symmetric polynomials. Thus, we conclude
\begin{equation}\label{d=2}
    P_2(t)=\frac{t^5}{(1-t^2)(1-t^4)}.
\end{equation}

\noindent\textbf{d=3.} There are two non-singleton partitions of $d=3$, namely $3=1+1+1$ and $3=1+2$. Let $T_{1+1+1}$ denote the stratum corresponding to the partition $1+1+1=3$, and so on. We have $\red_3(\CC)=T_{1+2}\cup T_{1+1+1}$ where $T_{1+1+1}$ is closed. The associated long exact sequence gives the following connecting homomorphism:
$$\delta_i:H^i_c(\sym^3\ir_{1}(\CC))\to H^{i+1}_c(\ir_2(\CC)\times \ir_1(\CC)).$$
We now show that the differential $\delta_i$ must be injective for every $i$. There is a surjective map $\pl_2(\CC)\times \ir_1(\CC)\to \red_3(\CC)$, given by the multiplication of two polynomials. The preimage of the closed subspace $T_{1+1+1}$ is $T_{1+1}\times T_1$, while the preimage of the open subspace $T_{1+2}$ is $T_2\times T_1$. We obtain the following commutative diagram:
\begin{equation}\label{inj}
\begin{tikzpicture}
\diagram (m)
{ H^i_c(\sym^3\ir_{1}(\CC))& H^{i+1}_c(\ir_2(\CC)\times \ir_1(\CC)) \\
   H^i_c(\sym^2\ir_{1}(\CC)\times \ir_1(\CC)) & H^{i+1}_c(\ir_2(\CC)\times \ir_1(\CC)) \\};
\path [->] (m-1-1) edge node [above] {$\delta_i$} (m-1-2)
                   edge node [left] {transfer} (m-2-1)
           (m-2-1) edge node [above] {$\delta_i^\prime\otimes id$} (m-2-2)
           (m-2-2) edge node [right] {$=$} (m-1-2);
\end{tikzpicture}
\end{equation}
The vertical map is a transfer homomorphism, given by including the ${S}_3$-invariant subspace of $H^i_c(\ir_{1}(\CC))^{\otimes 3}$ into the ${S}_2\times {S}_1$-invariant subspace. The differential $\delta_i^\prime:  H^i_c(\sym^2\ir_{1}(\CC))\to H^{i+1}_c(\ir_2(\CC) $ is an isomorphism for all $i$ by (\ref{r vs i}). Hence, $\delta$ must be injective, which implies
\begin{align*}
    H^{*+1}_c(\red_3(\CC))&\cong coker(\delta_i)\\
    &\cong \frac{\text{$S_2\times S_1$-invariant subspace of $H^*_c(\ir_1(\CC))^{\otimes 3}$}}{\text{$S_3$-invariant subspace of $H^*_c(\ir_1(\CC))^{\otimes 3}$}}
\end{align*}
We calculate the Poincar\'e series of the numerator and the denominator in the same way as in (\ref{d=2}), taking their difference and multiply by an appropriate power of $t$ to account for the degree shift, and obtain 
$$P_3(t)=\frac{t^{10}}{(1-t^2)(1-t^6)}.$$

\noindent\textbf{d=4.} A calculation of $P_4(t)$ is already too complex for us to sketch here in any reasonable length. Instead, we will be content with finding the first nonzero stable cohomology. We will show that $b_i(4)=0$ for any $i<11$ and that $b_{11}(4)=1$. Hence, $P_4(t)=t^{11}+O(t^{12})$.

There are four non-singleton partitions of 4. The partition poset is ordered below:

\begin{center}
    \begin{tikzpicture}
  \node (a) at (-1,1) {$1+3$};
  \node (c) at (1,1) {$2+2$};
  \node (e) at (0,0) {$1+1+2$};
  \node (min) at (0,-1) {$1+1+1+1$};
  \draw  (e) -- (min);
  \draw[preaction={draw=white, -,line width=6pt}] (a) -- (e) -- (c);
\end{tikzpicture}
\end{center}

The three levels of the partition lattice above induce a spectral sequence (\ref{spec}) with three columns. All terms in the spectral sequence with total degree $\le 8$ must be zero, by our previous computations for $d\le3$. Below we draw the region of the spectral sequence  with total degree $\le 10$. The column $p=0$ comes from $H^*_c(T_{1+1+1+1})$ where $H^{8}_c\cong H^{10}_c\cong \QQ$. The column $p=1$ comes from $H^*_c(T_{2+1+1})$ where $H^9_c\cong H^{11}_c\cong \QQ.$ The column $p=2$ comes from $H^*_c(T_{3+1})\oplus H^*_c(T_{2+2})$ where $H^{10}_c(T_{2+2})\cong \QQ$ by the case $d=2$ above, and $H^{10}_c(T_{1+3})=0$ by the cases $d=1$ and $d=3$ above.

\begin{center}
    
\begin{tikzpicture}
  \matrix (m) [matrix of math nodes,
    nodes in empty cells,nodes={minimum width=5ex,
    minimum height=5ex,outer sep=-5pt},
    column sep=1ex,row sep=1ex]{
          q      &      &     &     & \\
          10     &  \QQ & \QQ  &  & \\
          9     &  0 &  0  &  & \\
          8     &  \QQ  & \QQ &  \ZZZ  & \\
    \quad\strut &   0  &  1  &  2  & p& \strut \\};
    \draw[-stealth] (m-4-2.east) -- (m-4-3.west);
        \draw[-stealth] (m-4-3.east) -- (m-4-4.west);
            \draw[-stealth] (m-2-2.east) -- (m-2-3.west);
\draw[thick] (m-1-1.east) -- (m-5-1.east) ;
\draw[thick] (m-5-1.north) -- (m-5-5.north) ;
\end{tikzpicture}
\end{center}
By the same argument as in (\ref{inj}), the two differentials from the first column $p=0$ to the second column $p=1$ in the diagram above must be injective. Consequently, the differential $\delta: E_1^{1,8}\to E_1^{2,8}$ must be zero. Hence, $E_1^{2,8}$, circled in the diagram, must survive in the $E_\infty$-page, contributing to $H^{10}_c(\red_4(\CC))\cong H^{11}_c(\ir_4(\CC))\cong \QQ$.

\end{document}